\newtheorem{theorem}{Theorem}[section]
\newtheorem{lemma}[theorem]{Lemma}
\newtheorem{corollary}[theorem]{Corollary}
\newtheorem{proposition}[theorem]{Proposition}
\newtheorem{conjecture}[theorem]{Conjecture}
\newtheorem{question}[theorem]{Question}
\numberwithin{equation}{section}
\theoremstyle {definition}
\newtheorem{definition}[theorem]{Definition}
\DeclareMathOperator{\area}{area}
\DeclareMathOperator{\Lip}{Lip}
\DeclareMathOperator{\sys}{sys}
\DeclareMathOperator{\dist}{dist}
\DeclareMathOperator{\fillrad}{Fill-Rad}
\DeclareMathOperator{\width}{width}
\DeclareMathOperator{\length}{length}
\DeclareMathOperator{\tr}{tr}
\DeclareMathOperator{\Div}{div}
\definecolor{amber(sae/ece)}{rgb}{1.0, 0.49, 0.0}
\newfont{\rsfsten}{rsfs10 scaled 1200}
\DeclareMathOperator{\Ric}{Ric}
\DeclareMathOperator{\vol}{Vol}
\begin{document}

\title{The Gauss-Bonnet inequality beyond aspherical conjecture}
\author{Jintian Zhu}
\address[Jintian Zhu]{Beijing International Center for Mathematical Research, Peking University, Beijing, 100871, P.~R.~China}
\email{zhujt@pku.edu.cn, zhujintian@bicmr.pku.edu.cn}
\date{\today}
\begin{abstract}
Up to dimension five, we can prove that given any closed Riemannian manifold with nonnegative scalar curvature, of which the universal covering has vanishing homology group $H_k$ for all $k\geq 3$,  either it is flat or it has Gauss-Bonnet quantity (defined by \eqref{Eq: Gauss Bonnet quantity}) no greater than $8\pi$. In the second case, the equality for Gauss-Bonnet quantity yields that the universal covering splits as the Riemannian product of a $2$-sphere with non-negative sectional curvature and the Euclidean space. We also establish a dominated version of this result and its application to homotopical $2$-systole estimate unifies the results from \cite{BBN2010} and \cite{Zhu2020}.
\end{abstract}
\subjclass[2020]{Primary 53C21; Secondary 53C24}
\maketitle
\section{Introduction}
\subsection{Gauss-Bonnet inequality}
In Riemannian geometry, the integral of scalar curvature on a closed surface $(\Sigma,g)$ is characterized by the following Gauss-Bonnet formula
\begin{equation}\label{Eq: GB formula}
\int_\Sigma R_\Sigma\,\mathrm d\sigma_g=4\pi\chi(\Sigma),
\end{equation}
where $R_\Sigma$ is the scalar curvature of $(\Sigma,g)$, $\mathrm d\sigma_g$ is the area element of $(\Sigma,g)$ and $\chi(\Sigma)$ is the Euler number of $\Sigma$.

In research of three dimensional closed manifolds, integral of ambient scalar curvature on certain surfaces is considered in many works and it builds a bridge between curvature and topology of the ambient manifold. With a rearrangement in the second variation formula, Schoen and Yau \cite{SY1979} show that a stable two-sided minimal surface $\Sigma$ in a Riemannian $3$-manifold $(M,g)$ satisfies the Gauss-Bonnet inequality
\begin{equation}\label{Eq: GB ineq surface}
\int_\Sigma R(g)\,\mathrm d\sigma_g\leq 4\pi\chi(\Sigma).
\end{equation}
They further use this Gauss-Bonnet inequality to rule out the existence of smooth metrics with positive scalar curvature on manifolds containing an incompressible surface with positive genus (refer to \cite{SY1979, SY1982}). As for geometrical application, the Gauss-Bonnet inequality \eqref{Eq: GB ineq surface} also yields the existence of small $2$-spheres in a Riemannian $3$-manifold $(M,g)$ with non-trivial $\pi_2$ and large positive scalar curvature (refer to \cite{BBN2010} for the precise statement).

In this paper, we would like to figure out whether inequality \eqref{Eq: GB ineq surface} holds in Riemannian manifolds with dimension higher than three for certain non-trivial surface $\Sigma$. The setting in this paper is as follows.
Denote $(M^n,g)$ to be a Riemannian manifold with non-trivial $\pi_2$ and we introduce the {\it Gauss-Bonnet quantity} to be
\begin{equation}\label{Eq: Gauss Bonnet quantity}
Q_{GB}(M,g)=\inf\left\{\left.\int_{\mathbf S^2}R(g)\circ f\,\mathrm d\sigma_{f^*g}\,\right|\,f:\mathbf S^2\to M,\,[f]\neq 0\in\pi_2(M)\right\}.
\end{equation}
Inspired from the Gauss-Bonnet inequality \eqref{Eq: GB ineq surface} it is natural to ask
\begin{question}\label{Q: GB ineq}
Do we have $Q_{GB}(M,g)\leq 8\pi$ for any closed Riemannian manifold $(M,g)$?
\end{question}
First notice that Question \ref{Q: GB ineq} above only makes sense when $(M,g)$ has nonnegative scalar curvature. Otherwise we would have $Q_{GB}(M,g)=-\infty$ and the desired inequality holds trivially. For this reason, we just limit our attention to those Riemannian manifolds with nonnegative scalar curvature in the following context.

Clearly the answer to Question \ref{Q: GB ineq} is yes in dimension two and three. It is an immediate corollary of Gauss-Bonnet formula in dimension two and the work \cite{SY1979} in dimension three. In general the answer may be negative for Riemannian manifolds with dimension greater than $3$. For example, if $(M,g)$ is isometric to $\mathbf S^2(1)\times \mathbf S^3(\epsilon)$, then the Gauss-Bonnet quantity is
$$Q_{GB}(M,g)=4\pi(2+6\epsilon^{-1}),$$ which can be arbitrarily large as $\epsilon\to 0$. Therefore,
additional conditions are necessary for the validity of the desired inequality $Q_{GB}(M,g)\leq 8\pi$. In recent work \cite{GZ2021}, the author and Gromov can establish a similar Gauss-Bonnet inequality as \eqref{Eq: GB ineq surface} when given Riemannian $n$-manifold $(M,g)$ has $(n-2)$ directions large. Audience of interest can refer to \cite{GZ2021} for more details.

Here we shall consider Question \ref{Q: GB ineq} above from another point of view.
The philosophy that {\it one can attribute properties of scalar curvature on closed manifolds to the homotopy type of its universal covering} arises from the aspherical conjecture, which appears as a corollary of the strong Novikov conjecture (see \cite{Rosenberg1983}). Indeed, the aspherical conjecture asserts that if a closed manifold has contractible universal covering, then it admits no smooth metric with positive scalar curvature. This conjecture is now known to hold for closed manifolds with dimension no greater than five in a more general dominated version (see \cite{GL1983,SY1987, Gromov2020, CL2020}). Along a very similar philosophy we would like to give an affirmative answer to Question \ref{Q: GB ineq} for those closed manifolds whose universal covering $\hat M$ has vanishing homology group $H_k(\hat M,\mathbf Z)$ for all $k\geq 3$. Namely we have

\begin{theorem}\label{Thm: main}
Let $2\leq n\leq 5$ be an integer. Assume that $(M^n,g)$ is a closed orientable Riemannian manifold with nonnegative scalar curvature, whose universal covering $\hat M$ has vanishing homology group $H_k(\hat M,\mathbf Z)$ for all $k\geq 3$. Then we have
\begin{itemize}
\item either $(M,g)$ is flat;
\item or we have
$
\pi_2(M)\neq 0$ and
$Q_{GB}(M,g)\leq 8\pi,
$
where the equality holds if and only if the universal covering of $(M,g)$ is isometric to $(\mathbf S^2\times \mathbf R^{n-2},g_1+g_{euc})$, where $g_1$ is a smooth metric on $\mathbf S^2$ with nonnegative sectional curvature and $g_{euc}$ is the Euclidean metric on $\mathbf R^{n-2}$.
\end{itemize}
\end{theorem}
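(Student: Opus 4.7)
My plan is to first dispose of the case $\pi_2(M)=0$. In that case $\pi_2(\hat M)=0$ as well, and the hypothesis $H_k(\hat M,\mathbf Z)=0$ for $k\geq 3$ combined with Hurewicz gives, inductively, $\pi_k(\hat M)=0$ for all $k\geq 2$; hence $\hat M$ is contractible and $M$ is aspherical. The dominated aspherical conjecture in dimension $\leq 5$, proved in \cite{SY1987, Gromov2020, CL2020}, rules out positive scalar curvature on such $M$; combined with $R(g)\geq 0$ and the Kazdan--Warner/Bourguignon rigidity upgrade (scalar-flat plus no-PSC forces Ricci-flat, and Ricci-flat aspherical in dimension $\leq 5$ forces flat via Cheeger--Gromoll and Bieberbach), $g$ must be flat, giving the first alternative.

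\textbf{Base case $n=2,3$.} Now assume $\pi_2(M)\neq 0$. The strategy is to realize a nontrivial class of $\pi_2(M)$ by an area-minimizing $2$-sphere and to bound its scalar curvature integral via a second variation. Using Sacks--Uhlenbeck together with the replacement procedure of Meeks--Simon--Yau, obtain a conformal (possibly branched) harmonic $f\colon\mathbf S^2\to M$ with $[f]\neq 0$ in $\pi_2(M)$ whose area equals the infimum in its class. For $n=2$ the desired bound is the Gauss--Bonnet formula \eqref{Eq: GB formula} applied to $\mathbf S^2$. For $n=3$, $f$ is two-sided and stable, and the Schoen--Yau rearrangement of the second variation (used in \eqref{Eq: GB ineq surface}) directly yields $\int_{\mathbf S^2} R(g)\circ f\,\mathrm d\sigma_{f^*g}\leq 8\pi$; equality forces $f$ to be totally geodesic and round with $\Ric(\nu,\nu)\equiv 0$ along $f$.

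\textbf{Dimension reduction for $n=4,5$.} In these dimensions, the stability of a $2$-sphere controls at most the $n-2$ normal directions and does not capture the full ambient scalar curvature. I would pass to the universal cover $\hat M$, which is non-compact since $H_n(\hat M,\mathbf Z)=0$ while a closed orientable $n$-manifold has $H_n=\mathbf Z$. Then, in the spirit of the warped $\mu$-bubble slicing in \cite{CL2020, Gromov2020}, construct a proper function $\rho\colon\hat M\to\mathbf R$ and a drifted perimeter functional $\mathcal A(\Omega)=\mathcal H^{n-1}(\partial^*\Omega)-\int_\Omega h$ with $h$ chosen from $\rho$ via a Riccati-type ODE. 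A minimizer $N^{n-1}\subset\hat M$, equipped with a suitable conformal metric, carries nonneg scalar curvature. The choice of $\rho$ must be such that the natural map $H_2(N)\to H_2(\hat M)=\pi_2(M)$ is nontrivial; here the hypothesis $H_k(\hat M,\mathbf Z)=0$ for $k\geq 3$ enters essentially (it forces the ``cohomological dimension at $\pi_2$'' of $\hat M$ to be at most $2$, so a generic level slice of $\rho$ detects a class in $\pi_2$). For $n=4$ this reduces to the base case on $N^3$; for $n=5$ the slicing is iterated once more.

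\textbf{Rigidity and main obstacle.} In the equality case $Q_{GB}(M,g)=8\pi$, equality propagates backwards through every step: the minimizing sphere is round and totally geodesic, each $\mu$-bubble slice is totally geodesic with constant drift, and the associated warping/conformal factors are constant along the deforming direction. This produces a local isometric product $\mathbf S^2\times\mathbf R^{n-2}$ structure on $\hat M$, which a Cheeger--Gromoll splitting argument on the flat-directions base upgrades to a global isometry $\hat M\cong(\mathbf S^2,g_1)\times(\mathbf R^{n-2},g_{\mathrm{euc}})$, with $g_1$ having nonneg sectional curvature (inherited from being an area-minimizer in its homotopy class). I expect the main obstacle to be the reduction step, especially for $n=5$: arranging a two-step $\mu$-bubble slicing that simultaneously (i) produces nonneg conformal scalar curvature (a delicate but essentially standard Riccati computation) and (ii) survives with a nontrivial map $H_2(N)\to\pi_2(M)$, which is where the precise homological hypothesis $H_{\geq 3}(\hat M,\mathbf Z)=0$ does the essential work.
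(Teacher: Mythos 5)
Your overall blueprint (contractibility when $\pi_2(M)=0$, minimizing spheres in dimension $3$, $\mu$-bubble slicing in dimensions $4,5$) points at the right circle of ideas, but there are two genuine gaps that would block the argument. First, you begin the slicing step with only $R(g)\geq 0$. All the quantitative $\mu$-bubble machinery you invoke (Gromov band-width, diameter control of slices, existence of smooth minimizers of the drifted perimeter functional) requires a strictly positive stabilized scalar curvature lower bound, and $R(g)\geq 0$ alone gives nothing. The paper gets around this with a deformation step: when $(M,g)$ is not Ricci-flat and $\Ric(v,v)<0$ somewhere, one perturbs the metric $g\mapsto g_t=g-2t\zeta$ (with $\zeta$ chosen from $\Ric$) and checks via an eigenvalue computation \eqref{Eq: deformation} that the new metric has a $T^1$-stabilized scalar curvature lower bound $R(g)+2\lambda_{g_t}$ that is \emph{strictly} positive, while $g_t\geq g$ so the resulting sphere still has small $R(g)$-integral in $(M,g)$. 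This reduces the problem to the genuinely positive case and simultaneously makes the inequality strict, which is exactly what drives the rigidity. Your ``equality propagates backward through every step'' rigidity plan is therefore not only vague but unnecessary: in the paper, strict inequality in the deformed case forces equality to occur only when $\Ric\geq 0$, whence Cheeger--Gromoll gives the splitting directly.

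Second, the way you propose to detect a nontrivial class in $\pi_2$ is not how the hypothesis $H_k(\hat M,\mathbf Z)=0$ ($k\geq 3$) actually enters. Your heuristic that ``a generic level slice of $\rho$ detects a class in $\pi_2$'' by a cohomological-dimension count is not a proof, and a direct version of it would fail: the $\mu$-bubble slices and the subsequent slice-and-dice pieces can all be null-homotopic a priori. The paper argues by contrapositive. One first uses a geodesic line in $\hat M$ and two Busemann-like functions to produce a codimension-two cycle $S$ whose filling radius in $\hat M$ is at least a prescribed $L$ (Lemma~\ref{Lem: filling radius}, which is where $H_{n-1}(\hat M)=0$ is used). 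One fills $S$ by an area minimizer $\Sigma$, cuts $\Sigma$ by a $\mu$-bubble to produce a closed $3$-manifold $S_{r_0}$ with good stabilized scalar curvature, and then slice-and-dices $S_{r_0}$ into small spheres and disks (Proposition~\ref{Prop: slice and dice}). The selection argument then says: if every sphere and every glued sphere-plus-disk arising from the dicing were null-homotopic, then, using the uniform filling property of $\hat M$ and the vanishing of $H_3,H_4,H_5$, one could fill $S$ inside a neighborhood of controlled radius, contradicting $\fillrad(S;\hat M)\geq L$ for $L$ chosen large. So at least one small piece is homotopically nontrivial and satisfies the $8\pi+\tilde\epsilon$ bound. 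This contrapositive, filling-radius mechanism is the essential content of Proposition~\ref{Prop: selection}, and is missing from your plan.
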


After a slight modification of the proof for Theorem \ref{Thm: main}, we can strengthen Theorem \ref{Thm: main} to a dominated version. The precise statement is as below.

\begin{theorem}\label{Thm: main dominated}
Let $2\leq n\leq 5$ be an integer. The conclusion of Theorem \ref{Thm: main} also holds for any closed orientable Riemannian manifold $(M^n,g)$ with nonnegative scalar curvature, which admits a non-zero degree map to another closed orientable manifold $M_0$ whose universal covering $\hat M_0$ has vanishing homology group $H_k(\hat M_0,\mathbf Z)$ for all $k\geq 3$.
\end{theorem}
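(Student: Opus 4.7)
The plan is to reduce Theorem~\ref{Thm: main dominated} to the non-dominated Theorem~\ref{Thm: main} by exploiting the nonzero degree map $F\colon M\to M_0$ to transport the topological hypotheses on $\hat M_0$ into geometric data on $M$. The argument splits naturally according to whether $\pi_2(M_0)$ vanishes.

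If $\pi_2(M_0)=0$, then by Hurewicz applied to the simply-connected space $\hat M_0$ one has $H_2(\hat M_0,\mathbf Z)=\pi_2(\hat M_0)=\pi_2(M_0)=0$, and together with the standing hypothesis $H_k(\hat M_0,\mathbf Z)=0$ for $k\geq 3$ this makes $\hat M_0$ weakly contractible, hence contractible by Whitehead. Therefore $M_0$ is aspherical and the dominated aspherical conjecture (together with its rigidity statement) in dimensions $\leq 5$, see \cite{GL1983,SY1987,Gromov2020,CL2020}, forces $(M,g)$ to be flat, which is the first alternative in Theorem~\ref{Thm: main dominated}.

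If instead $\pi_2(M_0)\neq 0$, the plan is to lift $F$ to an $F_*$-equivariant map $\hat F\colon\hat M\to\hat M_0$ between universal covers and mimic the variational construction used in Theorem~\ref{Thm: main}, but applied to a class obtained through $\hat F$. The first technical point is to verify that $\pi_2(M)\neq 0$: since $\hat M_0$ has integral homology concentrated in degrees $0$ and $2$, the Cartan--Leray spectral sequence expresses the fundamental cohomology class $[M_0]^*\in H^n(M_0,\mathbf Z)$ as built from classes in $H^*(\pi_1(M_0))$ and $H^2(\hat M_0)$, and the nonzero degree of $F$ combined with a transfer/equivariance argument then produces a spherically nontrivial class in $\pi_2(M)$. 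Granting this, the same stable-minimization or $\mu$-bubble construction as in Theorem~\ref{Thm: main} produces a minimizer realizing $Q_{GB}(M,g)$; the stability inequality, $R(g)\geq 0$, and the Gauss--Bonnet formula yield $Q_{GB}(M,g)\leq 8\pi$; and in the equality case rigidity of the second variation promotes to a warped-product structure that globalizes to the splitting $\hat M\cong(\mathbf S^2\times\mathbf R^{n-2},g_1+g_{euc})$.

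The main obstacle I expect is the transport of the spherical class through $F$ when $\hat M$ itself is not assumed to satisfy the vanishing homology hypothesis: a direct pullback of a class in $\pi_2(\hat M_0)$ along $\hat F$ need not be injective on $\pi_2$, so the spectral-sequence/degree argument sketched above is the genuine new ingredient beyond Theorem~\ref{Thm: main}. By contrast, the remaining minimization, stability, and rigidity analysis should be a routine adaptation of the non-dominated proof, and no new geometric input is anticipated.
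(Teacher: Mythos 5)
Your proposal has a genuine structural gap: it treats Theorem~\ref{Thm: main dominated} as reducible to ``prove $\pi_2(M)\neq 0$, then run the machinery of Theorem~\ref{Thm: main}'', but that machinery cannot run. The selection argument (Proposition~\ref{Prop: selection}) that underlies the proof of Theorem~\ref{Thm: main} uses the hypothesis $H_k(\hat M,\mathbf Z)=0$ for $k\geq 3$ in an essential way --- it is what lets you fill the $3$-cycles $U_k+\sum_\tau V_{k,\tau}$ and $\sum_{\mathcal C_j}V_{k,\tau}$ in a bounded neighborhood, and hence derive a contradiction with the large filling radius of $S$ if every sphere and disk produced by slice-and-dice were homotopically trivial. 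In the dominated setting, $\hat M$ itself need not have any homology vanishing, so this step fails outright. Merely knowing $\pi_2(M)\neq 0$ gives no control on the integral $\int R$ over some sphere; you need a sphere which slice-and-dice actually hands you, and the only way to force that is the selection argument. You do recognize that pulling $\pi_2$ back via $\hat F$ is delicate, but you treat this as a secondary topological lemma, when the real issue is that the entire filling-radius bookkeeping has to be transported through $\hat F$.

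The paper's resolution is Proposition~\ref{Prop: selection dominated}: one does the extrinsic cutting (Lemmas~\ref{Lem: extrinsic cutting} and \ref{Lem: filling radius}) upstairs in $\hat M_0$, pulls back the cutting data via a proper Lipschitz map $\hat f:\hat M\to\hat M_0$ of nonzero degree (whose existence and properties come from \cite{CLL2021}), then does the intrinsic cutting and slice-and-dice in $\hat M$, and finally verifies the filling-radius contradiction back in $\hat M_0$ via $\hat f$, using $\fillrad(\deg\hat f\cdot S_0;\hat M_0)$ rather than any filling estimate in $\hat M$. This is the genuine new ingredient, and your proposal does not contain it or a substitute. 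Two further inaccuracies: the paper never produces a ``minimizer realizing $Q_{GB}(M,g)$''; it only produces spheres with $\int\sigma\leq 8\pi+\tilde\epsilon$ and feeds that into a deformation argument, so there is no stability/rigidity analysis on a $Q_{GB}$-minimizing sphere. And the case split on $\pi_2(M_0)$ is unnecessary --- the paper's Cases~1, 2a, 2b are by the sign of $\Ric(g)$, and Proposition~\ref{Prop: selection dominated} handles the nontrivial cases uniformly. Your observation that $\pi_2(M_0)=0$ forces $\hat M_0$ contractible (hence reduces to the dominated aspherical theorem) is correct as far as it goes, but it is a special case and does not help with the main content of the theorem.
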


We point out that Theorem \ref{Thm: main dominated} can be viewed as a further refinement of the dominated version of aspherical conjecture proven in low dimensions (see \cite{CL2020}).
As an immediate consequence, Theorem \ref{Thm: main dominated} further indicates an optimal homotopical $2$-systole estimate. Recall from \cite{BBN2010} that the {\it homotopical $2$-systole} is defined to be
\begin{equation*}
 \sys_2( M, g):=\inf\left\{\area(\mathbf S^2,i^* g)\left|\begin{array}{c}
 \text{$i:\mathbf S^2\to M$ smooth such that}\\
 \text{$[i]$ homotopically nontrivial}
 \end{array}\right.\right\}.
\end{equation*}

Now it is not difficult to see

\begin{corollary}\label{Cor: systole}
Let $2\leq n\leq 5$ be an integer. If $(M^n,g)$ is a closed orientable Riemannian manifold with positive scalar curvature, which admits a non-zero degree map to an orientable closed manifold $N$ whose universal covering $\hat N$ has vanishing homology group $H_k(\hat N,\mathbf Z)$ for $k\geq 3$, then we have
$$
\pi_2(M)\neq 0 \mbox{ and }
\inf_{M}R(g)\cdot\sys_2(M,g)\leq 8\pi,
$$
where the equality holds if and only if the universal covering of $(M,g)$ is isometric to $\mathbf S^2(1)\times \mathbf R^{n-2}$ up to scaling.
\end{corollary}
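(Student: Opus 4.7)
The plan is to derive Corollary \ref{Cor: systole} as an almost immediate consequence of Theorem \ref{Thm: main dominated}, combined with the elementary comparison between the Gauss--Bonnet quantity and the product $\inf R\cdot\sys_2$. First, since the scalar curvature is strictly positive on the closed manifold $M$, the metric $g$ cannot be flat, so the second alternative of Theorem \ref{Thm: main dominated} applies and delivers both $\pi_2(M)\neq 0$ and $Q_{GB}(M,g)\leq 8\pi$ at once.

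Next, I would record the pointwise bound: for any smooth $f:\mathbf S^2\to M$ representing a nontrivial class in $\pi_2(M)$,
\[
\int_{\mathbf S^2} R(g)\circ f\,\mathrm d\sigma_{f^*g}\ \geq\ \inf_M R(g)\cdot \area(\mathbf S^2,f^*g)\ \geq\ \inf_M R(g)\cdot \sys_2(M,g).
\]
Taking the infimum over admissible $f$ and chaining with the previous step yields $\inf_M R(g)\cdot \sys_2(M,g)\leq Q_{GB}(M,g)\leq 8\pi$, which is the desired inequality.

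For the rigidity statement, assume $\inf_M R(g)\cdot \sys_2(M,g)=8\pi$. Then every inequality in the chain above must saturate; in particular $Q_{GB}(M,g)=8\pi$, and the rigidity half of Theorem \ref{Thm: main dominated} forces the universal cover $\hat M$ to split as $(\mathbf S^2\times\mathbf R^{n-2},g_1+g_{euc})$ with $g_1$ of nonnegative sectional curvature on $\mathbf S^2$. To upgrade $g_1$ to a round metric, I would lift any nontrivial $f:\mathbf S^2\to M$ to $\hat M$ and project onto the first factor: the Jacobian inequality $\det(A+B)\geq \det A$ for positive semidefinite $A,B$, combined with the isomorphism $\pi_2(M)\cong \pi_2(\mathbf S^2)=\mathbf Z$, implies that the area of every representative dominates $\area(\mathbf S^2,g_1)$, which is itself realized by the slice $\mathbf S^2\times\{0\}$. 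Hence $\sys_2(M,g)=\area(\mathbf S^2,g_1)$. Combining the saturated equality $\inf_M R(g)\cdot \area(g_1)=8\pi$ with the surface Gauss--Bonnet identity $\int_{\mathbf S^2} R(g_1)\,\mathrm d\sigma_{g_1}=8\pi$ forces $R(g_1)$ to be constant on $\mathbf S^2$, so $g_1$ is a round metric, and the claimed isometry $\hat M\cong \mathbf S^2(1)\times\mathbf R^{n-2}$ up to scaling follows.

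The main conceivable obstacle is very mild: the inequality part is a one-line consequence of Theorem \ref{Thm: main dominated}, and the equality analysis is essentially bookkeeping. The only careful point is verifying that the product structure produced by the rigidity half is compatible with the saturation $\inf R\cdot \sys_2=8\pi$ in a way that upgrades $g_1$ to a constant curvature metric; the projection-to-$\mathbf S^2$ argument above handles this cleanly, and the equality case of $\int R\geq \min R\cdot \area$ then finishes the identification.
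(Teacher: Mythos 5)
Your proof is correct and follows the same route as the paper: deduce $\pi_2(M)\neq 0$ and $\inf_M R\cdot\sys_2\leq Q_{GB}\leq 8\pi$ directly from Theorem~\ref{Thm: main dominated}, then in the equality case first conclude $Q_{GB}=8\pi$, invoke the rigidity of Theorem~\ref{Thm: main dominated} to get the $\mathbf S^2\times\mathbf R^{n-2}$ splitting, and finally force $g_1$ to be round. You spell out, via the $1$-Lipschitz projection and the identification $\sys_2(M,g)=\area(\mathbf S^2,g_1)$, the step that the paper leaves implicit when it asserts $R(g)$ is constant; the two arguments are otherwise identical.
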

In his Ph.D. thesis, the author raised up the following {\it homotopical $2$-systole conjecture}:
\begin{conjecture}
If $(M,g)$ is a closed Riemannian manifold with $R(g)\geq 2$, whose universal covering is homotopic to $2$-sphere, then we have $$\sys_2(M,g)\leq 4\pi.$$
\end{conjecture}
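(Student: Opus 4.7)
The plan is to obtain Corollary \ref{Cor: systole} as a direct consequence of Theorem \ref{Thm: main dominated}, by converting the Gauss-Bonnet quantity bound into a systole bound and then running the equality analysis through the Gauss-Bonnet formula on the spherical factor.

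First I would apply Theorem \ref{Thm: main dominated} to $(M,g)$ with $M_0=N$. Since $R(g)>0$, the manifold cannot be flat, so the dichotomy forces $\pi_2(M)\neq 0$ and $Q_{GB}(M,g)\leq 8\pi$, which already delivers the topological part. For the quantitative bound I would chain two elementary inequalities: for every smooth map $f:\mathbf S^2\to M$ representing a nontrivial class in $\pi_2(M)$,
\begin{equation*}
\int_{\mathbf S^2}R(g)\circ f\,\mathrm d\sigma_{f^*g}\geq \Big(\inf_M R(g)\Big)\cdot\area(\mathbf S^2,f^*g)\geq \Big(\inf_M R(g)\Big)\cdot\sys_2(M,g),
\end{equation*}
and taking the infimum over $f$ yields $(\inf_M R(g))\cdot\sys_2(M,g)\leq Q_{GB}(M,g)\leq 8\pi$.

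For the rigidity case, I would assume $(\inf_M R(g))\cdot\sys_2(M,g)=8\pi$. Then $Q_{GB}(M,g)=8\pi$, and the equality clause of Theorem \ref{Thm: main dominated} tells me $\hat M$ is isometric to $(\mathbf S^2\times\mathbf R^{n-2},g_1+g_{euc})$ with $g_1$ a smooth metric of nonnegative sectional curvature on $\mathbf S^2$. The scalar curvature of the product depends only on the $\mathbf S^2$ factor, so $R=R_{g_1}$, and Gauss-Bonnet gives $\int_{\mathbf S^2}R_{g_1}\,\mathrm d\sigma_{g_1}=8\pi$. A slice $\mathbf S^2\times\{0\}\subset\hat M$ projects under the local isometry $\hat M\to M$ to an immersed $2$-sphere representing a generator of $\pi_2(M)\cong\pi_2(\hat M)\cong\mathbf Z$, with area $\area(\mathbf S^2,g_1)$, so $\sys_2(M,g)\leq\area(\mathbf S^2,g_1)$. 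Combining,
\begin{equation*}
8\pi=\Big(\inf_M R(g)\Big)\cdot\sys_2(M,g)\leq \Big(\inf_{\mathbf S^2}R_{g_1}\Big)\cdot\area(\mathbf S^2,g_1)\leq \int_{\mathbf S^2}R_{g_1}\,\mathrm d\sigma_{g_1}=8\pi,
\end{equation*}
so $R_{g_1}$ is constant on $\mathbf S^2$, hence $g_1$ is round, and matching the scalar-curvature times area condition pins down $\hat M$ as $\mathbf S^2(1)\times\mathbf R^{n-2}$ up to scaling.

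Given Theorem \ref{Thm: main dominated}, no serious obstacle remains; the only mildly delicate point will be the rigidity step, where I must make sure the projected slice genuinely computes an upper bound for $\sys_2$ rather than collapsing. This is automatic because $\hat M\to M$ is a Riemannian covering, so the projection is a local isometry of the slice and its class in $\pi_2(M)$ is the nontrivial generator inherited from the $\mathbf S^2$ factor.
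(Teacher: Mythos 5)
Your argument reproduces the paper's own proof of Corollary \ref{Cor: systole}: you chain $\inf_M R(g)\cdot\sys_2(M,g)\leq Q_{GB}(M,g)\leq 8\pi$ using Theorem \ref{Thm: main dominated}, and then run the equality case through the rigidity clause, spelling out in more detail than the paper's one-line remark why $R(g)$ must be constant and $g_1$ round. Two points are worth flagging when you convert this into a statement about the Conjecture itself: the result holds only in the range $2\leq n\leq 5$ (the dimension bound is inherited from Theorem \ref{Thm: main dominated} and the slice-and-dice machinery behind it, whereas the Conjecture as stated carries no dimension bound, so you are only answering it affirmatively in that range, exactly as the paper does), and Corollary \ref{Cor: systole} assumes $M$ orientable while the Conjecture does not, so you should note that one may pass to the orientation double cover, which changes neither $\inf_M R(g)$ nor $\sys_2(M,g)$ and preserves the homotopy type of the universal covering.
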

We conclude that Corollary \ref{Cor: systole} gives an affirmative answer to the above conjecture for dimension no greater than five. Besides this, it is also worth to emphasize that Corollary \ref{Cor: systole} unifies the homotopical $2$-systole estimates from those works in \cite{BBN2010} and \cite{Zhu2020}.
In \cite{BBN2010}, Bray, Brendle and Neves proved the desired homotopical $2$-systole estimate only assuming that the given closed $3$-manifold has non-trivial second homotopy group while the work in \cite{Zhu2020} needs to require the closed manifold admitting a non-zero map to $\mathbf S^2\times T^{n-2}$. At the first sight, these conditions are quite different but in both cases we can verify the given manifold satisfying the hypothesis of Corollary \ref{Cor: systole}.

Finally let us say some words on the proof of Theorem \ref{Thm: main}. The basic idea  is to search for a core $2$-sphere representing a non-trivial element in $\pi_2$ and also satisfying \eqref{Eq: GB ineq surface} when the manifold $(M,g)$ has positive scalar curvature. The method here is essentially based on the slice-and-dice argument from Chodosh and Li \cite{CL2020}, which provides us many spheres and disks with small integral of ambient scalar curvature on them. In the presence of positive scalar curvature and the $H_k$-vanishing condition of the universal covering for $k\geq 3$,
we can further apply a selection argument to these spheres and disks such that we can construct a homotopically non-trivial $2$-sphere from gluing those carefully chosen spherical and disc pieces, which finally satisfies \eqref{Eq: GB ineq surface} up to an arbitrarily small error. This fact combined with a deformation argument as well as the Cheeger-Gromoll splitting theorem \cite{CG1971}
can be used to show the desired Gauss-Bonnet inequality $Q_{GB}(M,g)\leq 8\pi$ as well as the desired rigidity.

The rest of this paper will be arranged as follows. Section 2 is devoted to a review on cut-and-paste and slice-and-dice argument in detail.  In Section 3, we introduce the selection argument and present proofs for Theorem \ref{Thm: main} and Theorem \ref{Thm: main dominated}.

\vspace{2mm}
{\it Acknowledgement.} The author would like to thank Professor Gang Tian for many inspiring conversations. He also thanks Professor Yuguang Shi for constant support. This work is supported by the
China postdoctoral science foundation (grant BX2021013). He also thanks the referee for their valuable suggestions and comments.

\section{Preparations}
In this section, we are going to review the cut-and-paste and slice-and-dice arguments, which are quite useful in the proof of aspherical conjecture for dimensions no greater than five. Even though many results are known, we need to modify them to more quantitive versions for later use and so we just include all detailed proofs here for completeness.
\subsection{Extrinsic cut-and-paste}
\begin{lemma}[Extrinsic cutting lemma]\label{Lem: extrinsic cutting}
Let $(\hat M^n,g)$ be a connected complete Riemannian manifold with $H_{n-1}(\hat M,\mathbf Z)=0$, which contains a geodesic line $\hat\gamma:\mathbf R\to \hat M$, i.e. $\dist(\hat\gamma(s),\hat\gamma(t))=|s-t|$. Let $\hat \rho_i$, $i=1,2$, be two smooth functions on $\hat M$ such that
$$
|\hat\rho_1(\cdot)-\dist(\hat\gamma(0),\cdot)|\leq 1
$$
and
$$
|\hat \rho_2(\cdot)-\dist(\hat\gamma([0,+\infty)),\cdot)|\leq 1.
$$
Then for any regular value $c_2>1$ of $\hat \rho_2$, we have $\hat \rho_1^{-1}(c_1)\cap \hat\rho_2^{-1}(c_2)\neq \emptyset$ for any regular value $c_1$ of $\hat\rho_1$ satisfying $c_1>c_2+2$. In particular, for almost every regular value $c_1>c_2+2$ of $\hat \rho_1$ the intersection
$$\Sigma=\hat\rho_1^{-1}((-\infty,c_1]))\cap \hat\rho_2^{-1}(c_2)$$ is a smooth hypersurface with smooth boundary $S=\hat\rho_1^{-1}(c_1)\cap \hat\rho_2^{-1}(c_2)$.
\end{lemma}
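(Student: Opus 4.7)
The plan is to argue by contradiction: supposing $\hat\rho_1^{-1}(c_1)\cap\hat\rho_2^{-1}(c_2)=\emptyset$, I would extract from $\hat\rho_2^{-1}(c_2)$ a compact closed hypersurface which both must be null-homologous (by $H_{n-1}(\hat M,\mathbf Z)=0$) and must have nontrivial mod-$2$ intersection number with the geodesic line $\hat\gamma$, a contradiction.

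The compact hypersurface in question is $N^-:=\hat\rho_2^{-1}(c_2)\cap\hat\rho_1^{-1}((-\infty,c_1])$. Since $c_2$ is regular, $N:=\hat\rho_2^{-1}(c_2)$ is a smooth hypersurface of $\hat M$; the emptiness assumption then forces $N^-$ to be open and closed in $N$, hence itself a smooth closed hypersurface of $\hat M$. The estimate $|\hat\rho_1-\dist(\hat\gamma(0),\cdot)|\leq 1$ places $N^-$ inside the closed metric ball of radius $c_1+1$ about $\hat\gamma(0)$, so $N^-$ is compact by Hopf--Rinow. The hypothesis $H_{n-1}(\hat M,\mathbf Z)=0$ (and hence its mod-$2$ version) together with the Pontryagin construction then produces a compact smooth $n$-submanifold-with-boundary $W\subset\hat M$ with $\partial W=N^-$.

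Next I would compute $\#_{\mathbf Z/2}(\hat\gamma\cap N^-)$ geometrically. On the positive ray one has $\hat\rho_2(\hat\gamma(t))\leq 1<c_2$ for $t\geq 0$, so no crossings occur there. On the negative ray $\hat\rho_2(\hat\gamma(-t))\geq t-1\to+\infty$ as $t\to+\infty$, while $\hat\rho_2(\hat\gamma(0))\leq 1<c_2$, so a parity/IVT argument yields an odd number of transverse crossings of the level $c_2$. At every such crossing the two-sided bound $|\hat\rho_2(\hat\gamma(-t))-t|\leq 1$ gives $|t|\in[c_2-1,c_2+1]$ and hence $\hat\rho_1(\hat\gamma(-t))\leq|t|+1\leq c_2+2<c_1$, so all crossings actually lie in $N^-$. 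This gives $\#_{\mathbf Z/2}(\hat\gamma\cap N^-)=1$. On the other hand, $\hat\gamma$ is proper and $W$ compact, so after a small transverse perturbation $\hat\gamma^{-1}(W)$ is a compact $1$-manifold whose boundary is $\hat\gamma^{-1}(N^-)$, which therefore has an even number of endpoints, contradicting the odd count above.

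The ``almost every'' smoothness of $\Sigma$ then follows from Sard's theorem applied to the smooth restriction $\hat\rho_1|_N:N\to\mathbf R$: for almost every $c_1$ the value is regular for this restriction, so $S=\hat\rho_1^{-1}(c_1)\cap N$ is a smooth codimension-two submanifold of $\hat M$ and $\Sigma=\hat\rho_1^{-1}((-\infty,c_1])\cap N$ is a smooth hypersurface with boundary $S$. The main subtlety is the passage from the purely homological statement $[N^-]=0$ to the existence of a \emph{smooth} compact bounding submanifold-with-boundary $W$, for which the Pontryagin--Thom construction is convenient; equivalently the final contradiction may be phrased directly in terms of the intersection pairing between ordinary and locally finite homology applied to $[N^-]$ and the proper class $[\hat\gamma]$.
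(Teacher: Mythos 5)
Your argument is correct but takes a genuinely different route from the paper's, which works with the $\hat\rho_1$-sublevel set rather than the $\hat\rho_2$-level set. In the paper, $\hat V$ denotes the component of $\{\hat\rho_1\le c_1\}$ containing $\hat\gamma(0)$; the hypothesis $H_{n-1}(\hat M,\mathbf Z)=0$ is used to show that the two exit points $\hat\gamma_\pm(t_\pm)$ of the geodesic from $\hat V$ lie in a common component of $\partial\hat V$ (otherwise $\hat\gamma$ could be modified to meet one boundary component exactly once, contradicting $H_{n-1}=0$), and the conclusion follows by applying the intermediate value theorem to $\hat\rho_2$ along a path in $\partial\hat V$ joining these two exit points. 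You instead argue by contradiction directly on $\hat\rho_2^{-1}(c_2)$: if the intersection were empty, then $N^-:=\hat\rho_2^{-1}(c_2)\cap\{\hat\rho_1\le c_1\}$ would be open and closed in $\hat\rho_2^{-1}(c_2)$, hence a compact closed hypersurface that is null-homologous by hypothesis, yet which your distance estimates force to meet the proper line $\hat\gamma$ with odd mod-$2$ intersection number, which is impossible. Both proofs ultimately exploit $H_{n-1}=0$ through an intersection count against $\hat\gamma$, but yours applies it directly to a piece of $\hat\rho_2^{-1}(c_2)$ and bypasses the paper's ``same component plus IVT'' intermediary, at the cost of verifying compactness of $N^-$ and organizing the transversality perturbations, which you acknowledge. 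One small caution: appealing to ``the Pontryagin construction'' to produce a \emph{smooth} compact codimension-zero $W$ with $\partial W=N^-$ from $[N^-]=0\in H_{n-1}(\hat M,\mathbf Z)$ is not quite immediate and is more than you need; the alternative you offer, namely the intersection pairing $H_{n-1}(\hat M,\mathbf Z)\times H_1^{\mathrm{lf}}(\hat M,\mathbf Z)\to\mathbf Z$ applied to $[N^-]$ and the locally finite class of $\hat\gamma$, is the clean way to finish and is the one you should rely on.
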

\begin{proof}
For convenience, we write $p_0=\hat\gamma(0)$ and define
$$\hat\gamma_\pm:[0,+\infty)\to \hat M,\quad t\mapsto\hat\gamma(\pm t).$$
Let us denote the component of $\hat\rho_1^{-1}((-\infty,c_1])$ containing the point $p_0$ by $\hat V$. Note that the distance between $\hat \rho_1^{-1}(c_1)$ and $p_0$ cannot exceed $c_1+1$. So there are positive constants $t_\pm$ such that $\hat\gamma_{+}(t_+)$ and $\hat\gamma_-(t_-)$ are in $\partial\hat V$ while $\hat\gamma_+((t_+,+\infty))$ and $\hat\gamma_-((t_-,+\infty))$ are outside $\hat V$.
\begin{figure}[htbp]
\centering
\includegraphics[width=7cm]{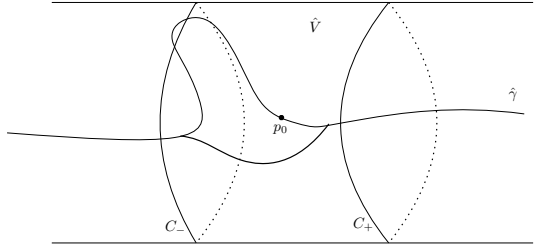}
\caption{The modification of $\hat\gamma$}
\label{Fig: 2}
\end{figure}

First we claim that $\hat\gamma_\pm(t_\pm)$ are contained in the same component of $\partial \hat V$.
Otherwise the points $\hat\gamma_\pm(t_\pm)$ are contained in different components, denoted by $C_\pm$ respectively. As shown in Figure \ref{Fig: 2}, we can modify the geodesic line $\hat\gamma$ to have only one intersection with $C_+$. In particular, this yields $H_{n-1}(\hat M,\mathbf Z)\neq 0$ and contradicts to our assumption.

Now we take a path $P$ in $\partial\hat V$ connecting $\hat\gamma_+(t_+)$ and $\hat\gamma_-(t_-)$. To conclude $\hat \rho_1^{-1}(c_1)\cap \hat\rho_2^{-1}(c_2)\neq \emptyset$, all we need to show are
$$
\dist(\hat\gamma_+(t_+),\hat\gamma_+)<c_2-1\quad \text{and}\quad \dist(\hat\gamma_-(t_-),\hat\gamma_+)>c_2+1.
$$
These inequalities are obvious since $\dist(\hat\gamma_+(t_+),\hat\gamma_+)=0<c_2-1$ and
$$
\dist(\hat\gamma_-(t_-),\hat\gamma_+)\geq t_-\geq c_1-1>c_2+1.
$$
For almost every regular value $c_1$ of $\hat \rho_1$, it is also a regular value the limited map of $\hat \rho_1$ on $\hat\rho_2^{-1}(c_2)$. Therefore, the hypersurface
$\Sigma=\hat\rho_1^{-1}((-\infty,c_1]))\cap \hat\rho_2^{-1}(c_2)$ has a smooth boundary $S=\hat\rho_1^{-1}(c_1)\cap \hat\rho_2^{-1}(c_2)$.
\end{proof}

Next we show that the boundary $S$ above has a "hole" with definite size in $\hat M$ and this needs the notion of (relative) filling radius.
\begin{definition}
Let $C$ be a $k$-cycle in a metric space $(X,d)$. The (relative) filling radius of $C$ in $(X,d)$ is defined by
$$
\fillrad(C;X)=\inf\left\{r>0\,|\,i_*:H_k(C,\mathbf Z)\to H_k(B_r(C),\mathbf Z)\mbox{ is zero}\right\}.
$$
\end{definition}
\begin{lemma}[Filling radius estimate]\label{Lem: filling radius}
Let $(\hat M,\hat g)$ and $S$ be the same as in Lemma \ref{Lem: extrinsic cutting}. Then for any integer $k\neq 0$ the filling radius of the chain $kS$ in $(\hat M,\hat g)$ satisfies
$$
\fillrad(kS;\hat M)\geq \min\{c_1-2c_2-3,c_2-1\}.
$$
\end{lemma}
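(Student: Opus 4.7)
The plan is to argue by contradiction via an intersection-number pairing against the geodesic line. Suppose some $r<\min\{c_1-2c_2-3,\,c_2-1\}$ admits a singular $(n-1)$-chain $\tau\subset B_r(S)$ with $\partial\tau=kS$. Then $k\Sigma-\tau$ is an $(n-1)$-cycle in $\hat M$ and the hypothesis $H_{n-1}(\hat M,\mathbf Z)=0$ supplies an $n$-chain $\Omega$ with $\partial\Omega=k\Sigma-\tau$. My goal is to exhibit a proper $1$-chain $\gamma\subset\hat M$ satisfying $\gamma\cdot k\Sigma=\pm k$ and $\gamma\cap\tau=\emptyset$: properness then makes $\gamma^{-1}(\Omega)$ a compact $1$-chain in the parameter line whose algebraic boundary count is $0$ and equals $\gamma\cdot(k\Sigma-\tau)$, while by construction $\gamma\cdot(k\Sigma-\tau)=\gamma\cdot k\Sigma-\gamma\cdot\tau=\pm k\neq 0$, a contradiction.

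The starting candidate for $\gamma$ is the geodesic line $\hat\gamma$. Along $\hat\gamma(t)$ the function $\hat\rho_2$ tends to $+\infty$ as $t\to-\infty$ while $\hat\rho_2\leq 1$ for $t\geq 0$, so the regular value $c_2$ is taken with algebraic multiplicity $\pm 1$ at some $t^*\approx -c_2$, and $\hat\rho_1(\hat\gamma(t^*))\leq|t^*|+1\leq c_2+2<c_1$ places this crossing inside $\Sigma$; hence $\hat\gamma\cdot k\Sigma=\pm k$. For the disjointness with $\tau$, for any $p\in B_r(S)$ choose $q\in S$ with $d(p,q)<r$; the triangle inequality combined with $|\hat\rho_i-\dist(\cdot,\cdot)|\leq 1$ yields $d(p,\hat\gamma_+)\geq c_2-1-r$ and $\hat\rho_1(p)\geq c_1-r-2$. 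The first estimate excludes the positive ray $\hat\gamma_+$ from $B_r(S)$ under $r<c_2-1$; the second, applied to the compact truncation $\hat\gamma_-|_{[0,2c_2]}$ on which $\hat\rho_1\leq 2c_2+1$, shows this arc sits strictly below the $\hat\rho_1$-slab $[c_1-r-2,c_1+r+2]$ enclosing $\tau$ under $r<c_1-2c_2-3$. Replacing the outgoing tail of $\hat\gamma_-$ past $t=2c_2$ with a proper path $\eta$ escaping to infinity through the complement of $\tau\cup k\Sigma$ and concatenating with $\hat\gamma_+$ gives the required $\gamma$.

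The main obstacle is constructing the tail $\eta$: continuing along $\hat\gamma_-$ itself may re-enter the $\hat\rho_2$-slab $[c_2-r-2,c_2+r+2]$ containing $\tau$ before $\hat\rho_1$ exits its critical range, so one must detour. The detour is placed inside $\{\hat\rho_1<c_1-r-2\}\cap\{\hat\rho_2>c_2\}$, a connected ``upper half'' of a Riemannian ball about $p_0$ in which $\hat\gamma_-(2c_2)$ already sits; from there one leaves to infinity in a direction with $\hat\rho_2$ monotone increasing, staying outside both the $\hat\rho_2$-slab of $\tau$ and the level $\hat\rho_2^{-1}(c_2)$ that carries $k\Sigma$. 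The truncation length $2c_2$ is the shortest that still encloses the $\Sigma$-crossing $t^*\approx -c_2$ of $\hat\gamma$, and this choice is precisely what produces the constant $c_1-2c_2-3$ in the bound.
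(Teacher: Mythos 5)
Your overall strategy is the same as the paper's: pair the chain $k\Sigma-\tau$ against the geodesic line $\hat\gamma$, note that this $(n-1)$-cycle bounds because $H_{n-1}(\hat M,\mathbf Z)=0$, and derive a contradiction from a nonzero intersection number $\pm k$. The difference lies in how you argue that the $1$-chain avoids $\tau$, and this is where the gap is.

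The paper directly proves the distance estimate $\dist(S,\hat\gamma)\geq\min\{c_1-2c_2-3,c_2-1\}$: the bound on $\hat\gamma_+$ is immediate from $\hat\rho_2$, and the bound on $\hat\gamma_-$ follows from the triangle inequality applied to $p_0$, a nearest point $p_1\in\hat\gamma_-$, and a point $p_2\in\hat\gamma_+$ close to $S$, using that $\hat\gamma$ is minimizing. This shows that the \emph{entire} geodesic line $\hat\gamma$ stays outside $B_r(S)\supset\tau$, so $\hat\gamma$ itself is the desired $1$-chain; no truncation or detour is required. You instead control only the truncated arc $\hat\gamma_-|_{[0,2c_2]}$ via a $\hat\rho_1$-slab argument and then invoke an auxiliary proper path $\eta$ through $\{\hat\rho_1<c_1-r-2\}\cap\{\hat\rho_2>c_2\}$ escaping to infinity with $\hat\rho_2$ increasing while avoiding both $\hat\rho_2^{-1}(c_2)$ and the $\hat\rho_2$-slab of $\tau$. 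This is the gap: you do not justify that the displayed region is connected, that one can leave it while keeping $\hat\rho_2>c_2$ (note $\{\hat\rho_1<c_1-r-2\}$ is a bounded set, so $\eta$ must exit it), or that a proper escape path avoiding $\hat\rho_2^{-1}(c_2)$ exists at all. These are not cosmetic: a priori the component of $\{\hat\rho_2>c_2\}$ containing $\hat\gamma_-(2c_2)$ could be bounded, in which case no such $\eta$ exists. Since the detour is unnecessary once one has the direct distance estimate, the cleanest fix is to drop the truncation altogether and instead prove $\dist(S,\hat\gamma_-)\geq c_1-2c_2-3$ via the triangle inequality as the paper does; then $\hat\gamma$ alone furnishes the intersection argument, and your slab calculation of a single algebraic crossing with $\Sigma$ near $t^*\approx -c_2$ goes through unchanged.
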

\begin{proof}
First we show $\dist(S,\hat\gamma)\geq \min\{c_1-2c_2-3,c_2-1\}$. Since we have
$$\dist(S,\hat \gamma_+)\geq c_2-1\geq \min\{c_1-2c_2-3,c_2-1\},$$ it suffices to deal with the case when there is a point $p_1$ on $\hat\gamma_-$ such that
$$
\dist(S,p_1)=\dist(S,\hat\gamma).
$$
Let $p_2$ be a point on $\hat\gamma_+$ such that $\dist(S,p_2)\leq c_2+1$.
As shown in Figure \ref{Fig: 3}, it follows from triangle inequality and minimizing property of $\hat\gamma$ that
\[
\begin{split}
c_1-1\leq\dist(S,p_0)\leq \dist(S,p_1)+2\dist(S,p_2)\leq \dist(S,p_1)+2c_2+2.
\end{split}\]
This yields the desired estimate.
\begin{figure}[htbp]
\centering
\includegraphics[width=10cm]{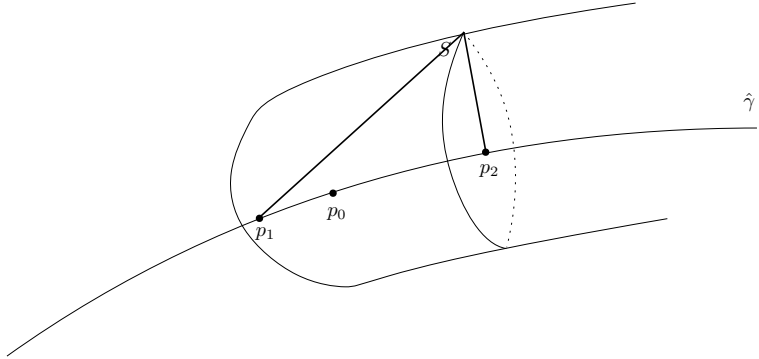}
\caption{The nearest points to $S$}
\label{Fig: 3}
\end{figure}

Now it is easy to deduce
$$
\fillrad(kS;\hat M)\geq \min\{c_1-2c_2-3,c_2-1\}.
$$
Otherwise there is a positive constant $r_0<\dist(S,\hat\gamma)$ such that the map
$$i_*:H_{n-2}(S,\mathbf Z)\to H_{n-2}(B_{r_0}(S),\mathbf Z)$$
is zero.
So we can find a chain $\hat C$ with support in $B_{r_0}(S)$ such that $\partial \hat C=kS$. Let $\Sigma$ be the same as in Lemma \ref{Lem: extrinsic cutting}. This implies that $k\Sigma-\hat C$ is a $(n-1)$-cycle having intersection number $k$ with $\hat\gamma$. In particular, $k\Sigma-\hat C$ is homologically non-trivial and we obtain $H_{n-1}(\hat M,\mathbf Z)\neq 0$, which contradicts to the assumption on $\hat M$.
\end{proof}

\begin{definition}
Let $(\hat M^n,\hat g)$ be a complete Riemannian manifold. It is said to have $C^0$-bounded geometry if there are positive constants $\delta$ and $C$ such that for any point $p$ in $\hat M$ there is a diffeomorphism $\Phi:B_\delta(p)\to B_\delta\subset \mathbf R^n$ such that $C^{-1}\Phi^*g_{euc}\leq \hat g\leq C\Phi^*g_{euc}$.
\end{definition}

\begin{lemma}[Pasting lemma]\label{Lem: pasting}
Let $n\leq 7$ be an integer. If $(\hat M^n,\hat g)$ is a smooth orientable Riemannian manifold with $C^0$-bounded geometry and $S$ is an oriented smooth $(n-2)$-submanifold in $\hat M$ such that $S=\partial C$ for some $(n-1)$-cycle $C$, then there is an oriented smooth embedded area-minimizing hypersurface $\Sigma_{min}$ with $\partial \Sigma_{min}=S$.
\end{lemma}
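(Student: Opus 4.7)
The plan is to combine the direct method of the calculus of variations in the framework of integral currents with the low-dimensional regularity theory for codimension-one area minimizers. Concretely, I would minimize mass in the class $\mathcal{S}$ of integral $(n-1)$-currents $T$ in $\hat M$ with $\partial T = S$; this class is nonempty because $C \in \mathcal{S}$, so the infimum $m := \inf\{\mathbf{M}(T) : T \in \mathcal{S}\}$ is finite.

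Take a minimizing sequence $\{T_i\} \subset \mathcal{S}$. Since $\hat M$ is not assumed compact, the first key step is to confine the sequence to a fixed bounded region. Fix a basepoint $p_0 \in \hat M$ so that $\operatorname{spt}(S) \cup \operatorname{spt}(C) \subset B_{R_0}(p_0)$. For suitably chosen $R > R_0$, slice each $T_i$ by the distance function $r(\cdot) = \dist(p_0, \cdot)$ and replace the portion of $T_i$ lying outside $B_R(p_0)$ by the corresponding piece of $C$ (which vanishes outside $B_{R_0}$). The $C^0$-bounded geometry hypothesis gives uniform local volume bounds on geodesic balls, and the coarea inequality then forces most annular slices of $T_i$ to have uniformly small mass; this allows the slice-and-paste operation to be carried out without increasing the total mass beyond $m + o(1)$. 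After this reduction each $T_i$ is supported in a fixed ball $B_R(p_0)$ with uniformly bounded mass, and $\mathbf{M}(\partial T_i) = \mathbf{M}(S)$ is fixed. The Federer--Fleming compactness theorem then produces a flat-topology subsequential limit $\Sigma_{min}$, which is an integral $(n-1)$-current with $\partial \Sigma_{min} = S$; lower semicontinuity of mass guarantees that $\Sigma_{min}$ realizes the infimum, so it is area-minimizing in $\mathcal{S}$.

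For regularity, since $\Sigma_{min}$ is a codimension-one area-minimizing integer-rectifiable current in a smooth Riemannian $n$-manifold with $n \leq 7$, Federer's interior regularity theorem (built on Simons' cone theorem) forces the interior singular set to have Hausdorff dimension at most $n - 8 < 0$, and hence to be empty; thus $\Sigma_{min}$ is a smoothly embedded hypersurface in its interior. The Hardt--Simon boundary regularity theorem then extends smoothness up to the boundary $S$. Orientability is inherited from the integer structure of the limiting current together with the given orientation of $S$.

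The main obstacle is the confinement step: preventing the minimizing sequence from escaping to infinity in the noncompact ambient manifold. It is precisely the $C^0$-bounded geometry hypothesis that makes the slice-and-replace cut-off produce uniform local mass bounds, thereby enabling Federer--Fleming compactness; once confinement is in place, both existence and regularity reduce to standard machinery in geometric measure theory.
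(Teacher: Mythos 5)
Your overall framework (direct method in the class of integral currents with $\partial T=S$, Federer--Fleming compactness, lower semicontinuity, interior regularity for $n\leq 7$, and boundary regularity) coincides with the paper's, and both of you correctly identify confinement of the minimizing sequence as the non-trivial issue in the noncompact case. The two proofs, however, handle confinement by genuinely different mechanisms, and yours has a gap.

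The step ``replace the portion of $T_i$ lying outside $B_R(p_0)$ by the corresponding piece of $C$'' does not produce a competitor: since $C$ vanishes outside $B_{R_0}\subset B_R$, this operation is literally the truncation $T_i\llcorner B_R$, whose boundary is $S+\langle T_i,r,R\rangle$, not $S$. To stay in the competitor class you would have to fill the slice $\langle T_i,r,R\rangle$ by an $(n-1)$-current of small mass, and --- crucially --- the filling would have to lie in a fixed compact set independent of $i$. The coarea argument does let you choose a radius $s_i$ where the slice has small mass, but that slice lives on the large sphere $\partial B_{s_i}$, and $C^0$-bounded geometry only yields a \emph{local} isoperimetric inequality on $\delta$-balls. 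There is no reason the filling of a slice that is spread over a large sphere can be performed with small mass or within a uniformly bounded region, so the reduction ``each $T_i'$ is supported in a fixed ball with mass $\leq m+o(1)$'' is not justified as stated. The paper avoids this altogether: it takes a compact exhaustion $\{U_k\}$, perturbs $\hat g$ near $\partial U_k$ to a uniformly equivalent metric $\hat g_k$ making $\partial U_k$ mean convex, so that each $U_k$ carries a genuine minimizer $\Sigma_k$ with $\partial\Sigma_k=S$. It then uses the local isoperimetric inequality (Simon, Theorem~30.1) together with bounded geometry to prove a uniform density lower bound $\mathbb M_{\hat g_k}(\Sigma_k\cap B_\delta(p))\geq\tau>0$ for every interior point $p$ of $\Sigma_k$; combined with the uniform mass upper bound $\mathbb M_{\hat g_k}(\Sigma_k)\leq 2^{n-1}\mathbb M_{\hat g}(C)$ this gives a uniform diameter bound for $\Sigma_k$, after which one passes to the limit. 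The density lower bound for an actual minimizer is exactly the tool that makes the confinement rigorous, and it is what your cut-and-fill argument is missing a substitute for.
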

\begin{proof}
The proof is based on the geometric measure theory and the audience can refer to \cite{S1983} for basic concepts. First let us deal with the case when $\hat M$ is compact. For our purpose we consider the class
$$
\mathcal I_S:=\{\mbox{rectifiable current $T$ with integer multiplicity and $\partial T=S$}\}.
$$
Take a sequence $T_j$ in $\mathcal I_S$ such that
$$\mathbb M(T_j)\to \inf_{T\in \mathcal I_S}\mathbb M(T)\mbox{ as } j\to+\infty.$$
Now it follows from Federer-Fleming compactness theorem (refer to \cite[Theorem 27.3]{S1983}) that $T_j$ converges to $T_\infty$ subsequently in the sense of current. As a consequence, $\partial T_\infty =S$ and the lower-semicontinuity of mass yields
$$
\mathbb M(T_\infty)= \inf_{T\in \mathcal I_S}\mathbb M(T).
$$
In particular, $T_\infty$ is an area-minimizing current and the regularity theorem (see \cite[Theorem 37.7]{S1983}) yields that the support of $T_\infty$ is an oriented smooth embedded minimal hypersurface when $n\leq 7$, denoted by $\Sigma_{\infty}$. From the constancy theorem (see \cite[Theorem 26.27]{S1983}) we conclude that $T_\infty$ has multiplicity $\pm 1$. If we take $\Sigma_{min}=\Sigma_\infty$, then it satisfies all our requirements.

Next we deal with the case when $\hat M$ is non-compact and the key thing is to avoid the sequence $T_j$ drifting to infinity. For this purpose we take a compact exhaustion $\{U_k\}_{k=1}^\infty$ of $\hat M$ and deform the metric $\hat g$ a little bit around $\partial U_k$ to a new metric $\hat g_k$ such that the boundary $\partial U_k$ is mean convex with respect to the outward unit normal in $(U_k,\hat g_k)$. Moreover, we can require
\begin{equation}\label{Eq: equivalence}
\frac{1}{4}\hat g\leq \hat g_k\leq 4\hat g\mbox{ as quadratic forms in } U_k.
\end{equation}
Since the boundary $\partial U_k$ serves as a barrier, we can find an oriented smooth embedded area-minimizing hypersurface $\Sigma_k$ with $\partial \Sigma_k=S$ with respect to the metric $\hat g_k$. Since the hypersurface $\Sigma_k$ is area-minimizing and $(\hat M,\hat g)$ has $C^0$-bounded geometry, we claim that there is a positive constant $\tau$ such that
$$\mathbb M_{\hat g_k}(\Sigma_k\cap B_\delta(p))\geq \tau$$
for any $p\in\Sigma_k$ with $B_\delta(p)\cap S=\emptyset$, where $B_\delta(p)$ is the geodesic $\delta$-ball centered at $p$ with respect to the metric $\hat g$. From the definition of $C^0$-bounded geometry there is a diffeomorphism $\Phi:B_\delta(p)\to B_\delta\subset \mathbf R^n$ such that $C^{-1}\Phi^*g_{euc}\leq \hat g\leq C\Phi^*g_{euc}$ for some universal positive constant $C$. It follows from \cite[Theorem 30.1]{S1983} that for any $0<s<\delta$ we can find a rectifiable current $T$ with integer multiplicity such that $\partial T=\Sigma_k\cap \partial B_s(p)$ and
$$
\mathbb M_{g_{euc}}(\Sigma_k\cap \partial B_s(p))\geq c(n)\mathbb M_{g_{euc}}(T)^{\frac{n-2}{n-1}}
$$
for a universal constant $c(n)$ depending only on $n$. From \eqref{Eq: equivalence} the area-minimizing property of $\Sigma_k$ with respect to $\hat g_k$ we see
\[
\begin{split}
\frac{\mathrm d}{\mathrm ds}\mathbb M_{\hat g_k}(\Sigma_k\cap B_s(p))&\geq \mathbb M_{\hat g_k}(\Sigma_k\cap \partial B_s(p))\\
&\geq (2C)^{2-n}\mathbb M_{g_{euc}}(\Sigma_k\cap \partial B_s(p))\\
&\geq (2C)^{2-n}c(n) \mathbb M_{g_{euc}}(T)^{\frac{n-2}{n-1}}\\
&\geq (2C)^{3-2n}c(n)\mathbb M_{\hat g_k}(T)^{\frac{n-2}{n-1}}\\
&\geq (2C)^{3-2n}c(n)\mathbb M_{\hat g_k}(\Sigma_k\cap B_s(p))^{\frac{n-2}{n-1}}.
\end{split}
\]
As a consequence, we have
$$
\frac{\mathrm d}{\mathrm ds}\left(\mathbb M_{\hat g_k}(\Sigma_k\cap B_s(p))^{\frac{1}{n-1}}\right)\geq \frac{1}{n-1}(2C)^{3-2n}c(n)
$$
and so
$$
\mathbb M_{\hat g_k}(\Sigma_k\cap B_\delta(p))^{\frac{1}{n-1}}\geq \tau:=\left(\frac{1}{n-1}(2C)^{3-2n}c(n)\delta\right)^{n-1}.
$$
Take a fixed rectifiable current $C$ with integer multiplicity and $\partial C=S$. Again from \eqref{Eq: equivalence} and the area-minimizing property of $\Sigma_k$ with respect to $\hat g_k$ we have
$$\mathbb M_{\hat g_k}(\Sigma_k)\leq 2^{n-1}\mathbb M_{\hat g}(C).$$
This implies that the diameter of $\Sigma_k$ with respect to $\hat g$ is uniformly bounded from above. Otherwise we can find sufficiently many disjoint $\delta$-balls where $\Sigma_k$ has mass no less than $\tau$ in each ball and finally this leads to a contradiction to the mass bound above. Since $\hat g_k$ is only different from $\hat g$ around boundary $\partial U_k$, the hypersurface $\Sigma_k$ is area-minimizing with respect to $\hat g$ in a slightly smaller region than $U_k$. From the uniform diameter estimate hypersurfaces $\Sigma_k$ are contained in a fixed compact subset of $\hat M$. So we can take $\Sigma_{min}$ to be the limit of $\Sigma_k$ up to a subsequence and it satisfies all our desired requirements.
\end{proof}
\subsection{$T^l$-stablized scalar curvature lower bound and intrinsic cutting}
We start with the following definition coming from \cite{Gromov2020}.
\begin{definition}
Let $(M,g)$ be a Riemannian manifold and $\sigma:M\to \mathbf R$ a continuous function on $M$. We say that $(M,g)$ has {\it $T^l$-stablized scalar curvature lower bound $\sigma$} if there are $l$ positive smooth functions $u_1,\ldots,u_l$ such that the warped metric
$$
g_{warp}=g+\sum_{i=1}^lu_i^2\mathrm d\theta_i^2
$$
on $M\times T^l$ satisfies $R(g_{warp})\geq \sigma$.
\end{definition}

\begin{lemma}[Intrinsic cutting lemma]\label{Lem: intrinsic cutting}
Let $(\Sigma^k,g_\Sigma)$, $k\leq 7$, be a compact Riemannian manifold with $T^l$-stablized scalar curvature lower bound $\sigma$ and $S=\partial\Sigma\neq \emptyset$. Then for any
$
r_0>0
$
we have the following alternative:
\begin{itemize}
\item either we have $\fillrad(S;\Sigma)\leq r_0$;
\item or we can find an embedded hypersurface $S_{r_0}$ homologous to $S$ such that
\begin{itemize}
\item $S_{r_0}$ is contained in $B_{r_0}(S)$;
\item $S_{r_0}$ with the induced metric has $T^{l+1}$-stablized scalar curvature lower bound
\begin{equation}\label{Eq: warped trick}
\sigma|_{S_{r_0}}-\frac{4(n-1)\pi^2}{nr_0^2},\mbox{ where } n=k+l.
\end{equation}
\end{itemize}
\end{itemize}
\end{lemma}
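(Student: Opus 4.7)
The plan is to run a warped $\mu$-bubble (prescribed-mean-curvature) argument of Gromov--Chodosh--Li type inside a tubular neighborhood of $S$ of radius $r_0$. I would first assume the first alternative fails, so $\fillrad(S;\Sigma) > r_0$, which means $[S]$ does not vanish in $H_{k-1}(B_{r_0}(S),\mathbf Z)$; in particular $B_{r_0}(S)\subsetneq\Sigma$, which leaves room to minimize. Set $u = u_1\cdots u_l$ and $g_{warp} = g_\Sigma + \sum_{i=1}^{l} u_i^2\,\mathrm d\theta_i^2$ on $\Sigma\times T^l$, so that $R(g_{warp})\geq \sigma$. Write $n = k+l$, let $d$ be a smooth regularization of $\dist(\cdot,S)$, and choose $h \in C^\infty(\{0<d<r_0\})$ depending only on $d$ with $h\to \pm\infty$ at the two walls $\{d=0\}$ and $\{d=r_0\}$, normalized as a standard $\tan$-profile satisfying an ODE of the form $2h' - \tfrac{n}{n-1}h^2 = \tfrac{4(n-1)\pi^2}{nr_0^2}$; this is precisely what will produce the advertised scalar curvature loss.

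Fix a small $\varepsilon>0$ and minimize the weighted prescribed-mean-curvature functional
\[
\mathcal A(\Omega) = \int_{\partial^*\Omega\setminus\{d\leq\varepsilon\}} u\,\mathrm d\mathcal H^{k-1} - \int_{\Omega\setminus\{d\leq\varepsilon\}} h\,u\,\mathrm d\mathcal H^{k}
\]
over Caccioppoli sets $\Omega\subset\Sigma$ with $\{d\leq\varepsilon\}\subset\Omega\subset\{d\leq r_0-\varepsilon\}$. The blow-up of $h$ at $\{d=0\}$ and $\{d=r_0\}$ acts as a barrier that forbids the minimizer from touching either wall, so by the standard existence theory for weighted $\mu$-bubbles a minimizer $\Omega_*$ exists whose free boundary $S_{r_0}:=\partial\Omega_*\setminus\{d\leq\varepsilon\}$ sits strictly inside $\{0<d<r_0\}\subset B_{r_0}(S)$. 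Since $\dim S_{r_0}=k-1\leq 6$, interior regularity in geometric measure theory ensures $S_{r_0}$ is a smooth embedded two-sided hypersurface, and the region $\Omega_*\setminus\{d\leq\varepsilon\}$ supplies a bordism inside $B_{r_0}(S)$ witnessing that $S_{r_0}$ is homologous to $S$.

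For the stabilisation step, I would take a positive first eigenfunction $\phi$ of the Jacobi-type stability operator of $S_{r_0}$; the first variation gives that $S_{r_0}$ has mean curvature $H_{S_{r_0}} = h - \langle \nabla\log u,\nu\rangle$ with respect to the outward unit normal $\nu$. Inserting $\phi$ into the second variation, invoking the warped-product formula for $R(g_{warp})$, and performing the Schoen--Yau rearrangement $|A|^2 \geq H^2/(k-1)$ together with the ODE satisfied by $h$, one arrives at the pointwise estimate
\[
R\Bigl(g_{S_{r_0}} + \sum_{i=1}^l u_i^2\,\mathrm d\theta_i^2 + \phi^2\,\mathrm d\theta_{l+1}^2\Bigr) \geq \sigma|_{S_{r_0}} - \frac{4(n-1)\pi^2}{n r_0^2}.
\]
Setting $u_{l+1} = \phi$ then furnishes the required $T^{l+1}$-stabilisation on $S_{r_0}$.

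The main obstacle is the second-variation computation in the warped setting: one must keep track of the cross terms contributed by each warping factor $u_i$ and verify that they are exactly absorbed by the corresponding terms in the warped scalar curvature formula, leaving only the ambient curvature and the $h$-terms, which are then closed off by the prescribed ODE. It is this ODE, together with the maximal interval length $r_0$ on which $h$ is allowed to blow up, that pins down the sharp constant $4(n-1)\pi^2/(n r_0^2)$; the existence and regularity of the $\mu$-bubble and the homological conclusion, by contrast, are by now standard once the barrier is in place.
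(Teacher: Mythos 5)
Your proposal follows essentially the same $\mu$-bubble route as the paper: assume $\fillrad(S;\Sigma)>r_0$, minimize a warped prescribed-mean-curvature functional with a $\tan$-profile potential over Caccioppoli sets in the $r_0$-neighborhood of $S$, use the first eigenfunction of the stability operator as the new warping factor $u_{l+1}$, and close the estimate by Cauchy--Schwarz rearrangement plus the Riccati-type ODE for $h$; the paper phrases the same minimization via a Lipschitz-$1$ function $\rho:B_{r_0}(S)\to(-r_0/2,r_0/2)$ and handles regularity by rewriting the $T^l$-invariant functional as a conformal weighted problem on $\Sigma$, which is a cleaner alternative to your $\varepsilon$-cutoff. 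One correction is needed in your stabilisation step: the rearrangement must be $|A|^2\geq H^2/(n-1)$ with $n=k+l$, applied to the second fundamental form of $S_{r_0}\times T^l$ inside $\Sigma\times T^l$ (where $H$ is the prescribed warped mean curvature $h\circ\rho$), not $|A|^2\geq H^2/(k-1)$; the sharp loss $4(n-1)\pi^2/(nr_0^2)$ comes from matching the ODE $\tfrac{n}{n-1}h^2-2|h'|=-4(n-1)\pi^2/(nr_0^2)$ against the same effective dimension $n-1$, and pairing the ODE in $n$ with a Cauchy--Schwarz in $k-1$ would not close.
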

\begin{proof}
Let us construct the desired hypersurface $S_{r_0}$ when $\fillrad(S;\Sigma)> r_0$. In this case, the region $M:=B_{r_0}(S)$ has two parts of boundary $\partial_-=S$ and $\partial_+=\partial M-S$. Clearly we have $\dist(\partial_-,\partial_+)=r_0$ and so we can construct a smooth function $\rho:M\to (-r_0/2,r_0/2)$ with $\Lip \rho\leq 1$ and $\rho^{-1}(\pm r_0/2)=\partial_\pm$.

Let us take
$$
h:\left(-\frac{r_0}{2},\frac{r_0}{2}\right)\to \mathbf R,\quad t\mapsto -\frac{2(n-1)\pi}{nr_0}\tan\left(\frac{\pi t}{r_0}\right) ,\quad n=k+l.
$$
In particular, the function $h$ satisfies
\begin{equation}\label{Eq: h equation}
\frac{n}{n-1}h^2-2|\mathrm d h|=-\frac{4(n-1)\pi^2}{nr_0^2}.
\end{equation}
Denote $\Omega_0=\{\rho<0\}$. We consider the minimizing problem for functional
\begin{equation*}%\label{Eq: Ah functional}
\mathcal A^h(\Omega)=\mathcal H^{n-1}_{g_{warp}}(\partial^*\Omega\times T^l)-\int_{M\times T^l}(\chi_\Omega-\chi_{\Omega_0})h\circ \rho\,\mathrm d\mathcal H^n_{g_{warp}}
\end{equation*}
among the following collection of Caccioppoli sets
\begin{equation*}
\mathcal C=\{\text{Caccioppoli sets $\Omega$ in $M$ such that $\Omega\Delta \Omega_0\Subset M-\partial M$}\}.
\end{equation*}
Here functions on $M$ are viewed as $T^l$-invariant functions on $M\times T^k$ in the definition of functional $\mathcal A^h$. It follows from \cite[Proposition 2.1]{Zhu2021} that there is a Caccioppoli set $\Omega_{min}$ minimizing the functional $\mathcal A^h$ in the class $\mathcal C$. Now we explain the reason why $\Omega_{min}$ has smooth boundary when $k\leq 7$ by viewing it as a minimizer of another functional on $M$. Recall that the warped metric $g_{warp}$ has the form
$$
g_{warp}=g_\Sigma+\sum_{i=1}^lu_i^2\mathrm d\theta_i^2.
$$
In particular, we can write the functional $\mathcal A^h$ as
$$
\mathcal A^h(\Omega)=\int_{\partial^*\Omega}\left(\prod_{i=1}^lu_i\right)\mathrm d\mathcal H^{k-1}_g-\int_M(\chi_\Omega-\chi_{\Omega_0})\left(\prod_{i=1}^lu_i\right)h\circ \rho\,\mathrm d\mathcal H^{k}_g.
$$
Let
$$
g_{conf}=\left(\prod_{i=1}^lu_i\right)^{\frac{2}{k-1}}g\mbox{ and } v=\left(\prod_{i=1}^lu_i\right)^{-\frac{1}{k-1}}.
$$
Then the set $\Omega_{min}$ can be viewed as a minimizer of the functional
$$
\mathcal H^{k-1}_{g_{conf}}(\partial^*\Omega)-\int_{M}(\chi_{\Omega}-\chi_{\Omega_0})vh\circ\rho\,\mathrm d\mathcal H^k_{g_{conf}}.
$$
As a consequence the smoothness of $\Omega_{min}$ follows from the geometry measure theory (see \cite[Theorem 4.2]{Duzaar1993}) when $k\leq 7$.

Denote $S_{r_0}=\partial\Omega_{min}$. Clearly $S_{r_0}$ Now it remains to show \eqref{Eq: warped trick} for  Denote $\nu$ to be the outward unit normal of $\partial\Omega_{min}$ with respect to $\Omega_{min}$ and let $X$ be a smooth vector field on $\Sigma$ with compact support whose limit on $\partial\Omega_{min}$ equals to $\psi\nu$ for some smooth function $\psi$. Let $\Phi:(-\epsilon,\epsilon)\times\Sigma \to \Sigma$ be the flow generated by $X$ and $\Omega_t=\Phi(t,\Omega_{min})$. It is not difficult to compute the first variation formula
$$
\left.\frac{\mathrm d}{\mathrm dt}\right|_{t=0}\mathcal A^h(\Omega_t)=\int_{S_{r_0}\times T^l}(H-h\circ \rho)\psi \,\mathrm d\sigma=0,
$$
where $H$ is the mean curvature of $S_{r_0}\times T^l$ with respect to the outward unit normal vector field.
The arbitrary choice of the vector field $X$ (consequently as well as the function $\psi$) implies that $H=h\circ \rho$. Now we can further calculate
\[
\begin{split}
\left.\frac{\mathrm d^2}{\mathrm dt^2}\right|_{t=0}\mathcal A^h(\Omega_t)=&\int_{S_{r_0}\times T^l}|\nabla\psi|^2\\
&-\frac{1}{2}\left(R(g_{warp})-R+H^2+|A|^2-2\nu(h\circ\rho)\right)\psi^2\,\mathrm d\sigma\geq 0,
\end{split}
\]
where $R$ is the scalar curvature of $S_{r_0}\times T^l$ with respect to the induced metric and $A$ is the second fundamental form of $S_{r_0}\times T^l$ with respect to the outward unit normal vector field. Define
$$
\mathcal L=-\Delta-\frac{1}{2}\left(R(g_{warp})-R+H^2+|A|^2-2\nu(h\circ\rho)\right).
$$
Again from the arbitrary choice of the vector field $X$ we see that the operator $\mathcal L$ is nonnegative. Take $u_{l+1}$ to be the first eigenfunction with respect to $\mathcal L$ whose corresponding first eigenvalue is denoted by $\lambda_1$. It is clear that $u_{l+1}$ is a positive smooth function. It follows from \cite[Proposition 11.14]{GL1983} that
\begin{equation*}
\begin{split}
&R\left(g_\Sigma|_{S_{r_0}}+\sum_{i=1}^l(u_i|_{S_{r_0}})^2\mathrm d\theta_i^2+u_{l+1}^2\mathrm d\theta_{l+1}^2\right)\\
=&R-\frac{2\Delta u_{l+1}}{u_{l+1}}\\
=&R(g_{warp})+H^2+|A|^2-2\nu(h\circ\rho)+2\lambda_1\\
\geq & \sigma|_{S_{r_0}}+\left(\frac{n}{n-1}h^2-2|\mathrm d h|\right)\circ \rho\\
=&\sigma|_{S_{r_0}}-\frac{4(n-1)\pi^2}{nr_0^2},
\end{split}
\end{equation*}
where we use the facts $|\nu(h\circ \rho)|\leq |\mathrm dh|\circ \rho$ and
$$
|A|^2\geq \frac{1}{n-1}H^2,\mbox{ where }n=k+l.
$$
This completes the proof.
\end{proof}

As a preparation we also need to mention Gromov's width estimate from \cite{Gromov2018}. Recall that a compact manifold $(M,\partial_\pm)$ with $\partial M=\partial_-\sqcup\partial_+$ is called an over-torical band if there is a non-zero degree map
$$
f:(M,\partial_\pm)\to\left(T^{n-1}\times[-1,1],T^{n-1}\times \{\pm 1\}\right).
$$
\begin{proposition}
If $(M^n,g,\partial_\pm)$ is an over-torical band with scalar curvature $R(g)\geq \sigma_0>0$, then the width
$$
\width(M,g):=\dist(\partial_-,\partial_+)\leq 2\pi\sqrt{\frac{n-1}{n\sigma_0}}<\frac{2\pi}{\sqrt{\sigma_0}}.
$$
\end{proposition}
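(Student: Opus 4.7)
The plan is a contradiction argument via a $\mu$-bubble of exactly the kind appearing in the proof of Lemma \ref{Lem: intrinsic cutting}, reducing at the end to the non-existence of positive scalar curvature on closed manifolds dominating a torus. Suppose $L:=\width(M,g)>2\pi\sqrt{(n-1)/(n\sigma_0)}$, and fix $L_0$ satisfying $2\pi\sqrt{(n-1)/(n\sigma_0)}<L_0<L$.

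First I would construct a smooth function $\rho:M\to[-L_0/2,L_0/2]$ with $|\nabla\rho|\leq 1$ which is identically $\pm L_0/2$ on collar neighborhoods of $\partial_\pm$; this is possible precisely because $L>L_0$. Using the blow-up barrier
$$h(t)=-\frac{2(n-1)\pi}{nL_0}\tan\left(\frac{\pi t}{L_0}\right),\qquad t\in\left(-\frac{L_0}{2},\frac{L_0}{2}\right),$$
which satisfies $\frac{n}{n-1}h^2-2|h'|=-\frac{4(n-1)\pi^2}{nL_0^2}$, I would minimize the functional
$$\mathcal A^h(\Omega)=\mathcal H^{n-1}(\partial^*\Omega)-\int_M(\chi_\Omega-\chi_{\Omega_0})\,h\circ\rho\,d\mathcal H^n,\qquad \Omega_0=\{\rho<0\},$$
over Caccioppoli sets $\Omega$ with $\Omega\Delta\Omega_0\Subset\{|\rho|<L_0/2\}$. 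The singularities of $h$ at $\pm L_0/2$ act as barriers keeping the minimizer strictly inside $\{|\rho|<L_0/2\}$, hence away from $\partial M$, and the existence and smoothness (for $n\leq 7$) of a minimizer $\Omega_{\min}$ follow verbatim from the argument used in the proof of Lemma \ref{Lem: intrinsic cutting}.

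Setting $\Sigma=\partial^*\Omega_{\min}$, the first and second variation computations of Lemma \ref{Lem: intrinsic cutting} give the mean curvature identity $H=h\circ\rho$ and equip $\Sigma$ with a $T^1$-stablized scalar curvature lower bound
$$\sigma_1:=\sigma_0-\frac{4(n-1)\pi^2}{nL_0^2}>0,$$
the strict positivity being the sole purpose of the choice of $L_0$. Since $\Omega_{\min}$ contains a collar of $\partial_-$ and is disjoint from a collar of $\partial_+$, the closed hypersurface $\Sigma$ is homologous to $\partial_-$ in $M$. Composing the degree-$d$ map $f:(M,\partial_\pm)\to(T^{n-1}\times[-1,1],T^{n-1}\times\{\pm 1\})$ with the projection $T^{n-1}\times[-1,1]\to T^{n-1}$ and restricting to $\Sigma$ therefore produces a degree-$d$ map $\Sigma\to T^{n-1}$. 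Thus $\Sigma\times T^1$ with the warped metric $g_\Sigma+u_1^2\,d\theta_1^2$ is a closed $n$-manifold with scalar curvature bounded below by $\sigma_1>0$ admitting a nonzero degree map to $T^n=T^{n-1}\times T^1$, contradicting the classical non-existence of positive scalar curvature on closed manifolds dominating the torus (Schoen-Yau/Gromov-Lawson) in this dimension range.

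I expect the main obstacle to be the careful construction of the reference function $\rho$ together with the verification that the explosive $h$-barrier truly confines the minimizer into the interior of $M$; the variational computations are identical to those already carried out in the proof of Lemma \ref{Lem: intrinsic cutting}, and the concluding topological step is essentially automatic given the over-torical hypothesis.
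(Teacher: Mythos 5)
The paper itself gives no proof of this proposition—it is simply recalled as Gromov's width estimate with a citation to \cite{Gromov2018}—so there is no internal argument to compare against. Your $\mu$-bubble proof is the standard one and is correct: it is exactly the prescribed-mean-curvature functional $\mathcal A^h$ with the $\tan$-type blow-up barrier that the paper already sets up in Lemma \ref{Lem: intrinsic cutting}, the first and second variation computations transfer verbatim, and the cobordism $\Omega_{\min}$ between $\partial_-$ and $\Sigma$ gives the degree-$d$ map $\Sigma\to T^{n-1}$ and hence the PSC closed $n$-manifold $\Sigma\times T^1$ dominating $T^n$. One caveat worth stating explicitly: the GMT regularity of $\Omega_{\min}$ and the torus non-existence theorem in the form you invoke both require $n\leq 7$, whereas the proposition is stated with no dimension restriction (Gromov handles all $n$ by an inductive descent of $\mu$-bubbles). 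Since the paper only needs $n\leq 5$ this is harmless, but if you want the statement as written you should either add the dimension hypothesis or replace the final step by the inductive descent.
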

This width estimate has the following immediate corollary.
\begin{corollary}\label{Cor: diameter estimate}
If $(S^2,g_S)$ is a closed surface with $T^l$-stablized scalar curvature lower bound $\sigma \geq \sigma_0>0$, then its diameter cannot exceed $2\pi/\sqrt{\sigma_0}$. If $(S,g_S)$ is a compact surface with boundary, then above diameter estimate holds if $\partial S\times T^l$ is mean convex with respect to the $T^l$-warped metric.
\end{corollary}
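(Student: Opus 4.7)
The plan is to reduce both assertions to the width estimate for over-torical bands stated in the preceding proposition, by cutting geodesic disks out of $S$ and crossing the remainder with $T^l$. For the closed case I would argue by contradiction: assume the diameter of $(S^2, g_S)$ strictly exceeds $2\pi/\sqrt{\sigma_0}$ and pick $p, q \in S^2$ with $d_{g_S}(p,q) > 2\pi/\sqrt{\sigma_0}$. Choose $\epsilon > 0$ so small that $d(p,q) - 2\epsilon > 2\pi/\sqrt{\sigma_0}$ and that the geodesic balls $B_\epsilon(p), B_\epsilon(q)$ are disjoint embedded disks. Then $A := S^2 \setminus (B_\epsilon(p) \cup B_\epsilon(q))$ is diffeomorphic to $S^1 \times [-1,1]$.

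I would then form the compact Riemannian manifold $M := A \times T^l$ with the restriction of the warped metric $g_{warp} = g_S + \sum_{i=1}^l u_i^2\, \mathrm{d}\theta_i^2$. Its two boundary components $\partial B_\epsilon(p) \times T^l$ and $\partial B_\epsilon(q) \times T^l$ are both diffeomorphic to $T^{l+1}$, and a diffeomorphism $A \cong S^1 \times [-1,1]$ induces a diffeomorphism $M \cong T^{l+1} \times [-1,1]$ matching the boundary components to $T^{l+1} \times \{\pm 1\}$; hence $M$ is an over-torical band of dimension $l+2$, and by hypothesis $R(g_{warp}) \geq \sigma \geq \sigma_0 > 0$. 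Applying the width estimate from the previous proposition then yields $\dist_{g_{warp}}(\partial_-, \partial_+) < 2\pi/\sqrt{\sigma_0}$.

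On the other hand, the projection $\pi: (S^2 \times T^l, g_{warp}) \to (S^2, g_S)$ is $1$-Lipschitz, since for any tangent vector $v = v_S + v_T$ the warped norm satisfies $|v|^2_{g_{warp}} = |v_S|^2_{g_S} + \sum u_i^2 (v_T^i)^2 \geq |v_S|^2_{g_S}$. Consequently
\[
\dist_{g_{warp}}(\partial_-, \partial_+) \geq \dist_{g_S}(\partial B_\epsilon(p), \partial B_\epsilon(q)) \geq d(p,q) - 2\epsilon > 2\pi/\sqrt{\sigma_0},
\]
contradicting the width bound and finishing the closed case.

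For the compact-surface-with-boundary case I would run essentially the same argument, but removing only one small disk $B_\epsilon(p)$ around an interior point $p$ and using $\partial S \times T^l$ as the other end of the band; the mean convexity hypothesis is exactly what is needed so that $\partial S \times T^l$ acts as a barrier for the $\mu$-bubble construction underlying the width estimate. The subtlety I expect to be the main obstacle is cutting in such a way that the resulting region is a genuine over-torical band, i.e. admits a non-zero degree map onto $T^{l+1} \times [-1,1]$: for a surface of arbitrary genus and with several boundary components, one must choose $p$ so that some distinguished component of $\partial S$ lies at distance greater than $2\pi/\sqrt{\sigma_0}$ from $p$ and then restrict to an annular subsurface joining $\partial B_\epsilon(p)$ to that component of $\partial S$, a step that relies on elementary surface topology. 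The comparison between the warped distance and the $g_S$-distance via the $1$-Lipschitz projection then produces the contradiction exactly as in the closed case.
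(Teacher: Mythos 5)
Your argument for the closed case $S = \mathbf S^2$ is correct and genuinely different from the paper's. The paper does not cut out disks: instead it minimizes the area of hypersurfaces of the form $\gamma\times T^l$ among curves $\gamma$ joining $p$ to $q$ (with $\partial S\times T^l$ acting as a barrier in the boundary case), obtains a stable minimizer $\gamma_0$, reads off a $T^{l+1}$-stabilized scalar curvature lower bound $\sigma$ on the $1$-manifold $\gamma_0$, and finally applies the band-width estimate to the $(l+2)$-dimensional band $\gamma_0\times T^{l+1}$ to bound $\length(\gamma_0)\geq d(p,q)$. You instead apply the band-width estimate directly to the $(l+2)$-dimensional band $A\times T^l$, where $A$ is the annulus $\mathbf S^2\setminus(B_\epsilon(p)\cup B_\epsilon(q))$, and transfer the conclusion back via the $1$-Lipschitz projection $\pi:(S\times T^l,g_{warp})\to (S,g_S)$. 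For the closed case this is cleaner: it avoids the minimization step entirely and uses only the topology of the sphere.

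For the surface-with-boundary case there is a genuine gap. The diameter of $(S,g_S)$ may be realized by two \emph{interior} points $p,q$, and the band you propose --- with $\partial B_\epsilon(p)\times T^l$ as one end and $\partial S\times T^l$ as the other --- only controls $\dist(p,\partial S)$, which does not bound $d(p,q)$. Removing disks about both $p$ and $q$ does not help either, because the resulting surface has three groups of boundary circles, and if you absorb $\partial S$ into one of $\partial_\pm$ then the band width estimate bounds the minimum of $\dist(\partial B_\epsilon(p),\partial B_\epsilon(q))$ and $\dist(\partial B_\epsilon(p),\partial S)$, which again fails to control $d(p,q)$. The mean convexity hypothesis on $\partial S\times T^l$ is not of the right form to be plugged directly into the stated over-torical band width estimate as an ``extra barrier component''; that would require a free-boundary variant of the $\mu$-bubble argument, which the Proposition you invoke does not provide. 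This is precisely what the paper's intrinsic approach circumvents: the minimizing curve $\gamma_0$ from $p$ to $q$ is kept away from $\partial S$ by mean convexity, and the band $\gamma_0\times T^{l+1}$ has exactly two ends $\{p\}\times T^{l+1}$ and $\{q\}\times T^{l+1}$ regardless of the topology of $S$ or the number of its boundary components, so $\length(\gamma_0)\geq d(p,q)$ is bounded directly. A secondary, more repairable issue is your ``annular subsurface'' step: one does not really need an annulus, only a non-zero degree map to $T^{l+1}\times[-1,1]$, and this exists whenever $\partial_-$ and $\partial_+$ are both nonempty; but this point becomes moot once the main gap above is noted.
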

\begin{proof}
Let $p$ and $q$ be any pair of points in $S$. For our purpose we minimize the area functional among hypersurfaces in $(S\times T^l,g_{warp})$ in the form of $\gamma\times T^l$ for a curve $\gamma\subset S$ whose boundary is exactly $\{p,q\}\times T^l$. From geometric measure theory we can find a smooth area-minimizing hypersurface $\gamma_0\times T^l$ in $(S\times T^l,g_{warp})$, where $\gamma_0$ is a curve connecting $p$ and $q$. From a similar calculation as in the proof of Lemma \ref{Lem: intrinsic cutting} we know that $\gamma_0$ has $T^{l+1}$ stablized scalar curvature lower bound $\sigma\geq \sigma_0$. From Gromov's width estimate we have
$$
\length(\gamma_0)=\width(\gamma_0\times T^{l+1})<\frac{2\pi}{\sqrt{\sigma_0}}.
$$
This completes the proof.
\end{proof}

\subsection{Slice-and-dice argument}
In this subsection, let us make a further discussion on the hypersurface $S_{r_0}$ from intrinsic cutting in the Riemannian manifold $(M,g)$. First we recall the slice-and-dice result from \cite{CL2020}.
\begin{proposition}\label{Prop: slice and dice}
Let $(S,g_S)$ be a closed Riemannian $3$-manifold with $T^l$-stablized scalar curvature lower bound $\sigma\geq \sigma_0>0$. Then for any $0<\epsilon<\sigma_0$ there are finitely many embedded $2$-spheres $S_1$, ..., $S_p$ and embedded $2$-disks $D_1$, ..., $D_q$ such that
\begin{itemize}
\item the diameters of all spheres $S_i$ and disks $D_j$ with respect to induced metric are bounded from above by
$
2\pi/\sqrt{\sigma_0-\epsilon}
$;
\item we have
$$
\int_{S_i}(\sigma|_{S_i}-\epsilon)\,\mathrm d\mu\leq 8\pi\mbox{ and } \int_{D_j}(\sigma|_{D_j}-\epsilon)\,\mathrm d\mu\leq 4\pi;
$$
\item spheres $S_i$ are pairwise disjoint, disks $D_j$ are also pairwise disjoint but intersect one of $S_i$ transversely on its boundary $\partial D_j$;
\item the diameter of each component of the complement
$$
S-\left(\bigcup_{i=1}^p S_i\right)\cup\left(\bigcup_{j=1}^q D_j\right)
$$
is no greater than
$$
\frac{2\pi}{\sqrt{\sigma_0-\epsilon}}+\frac{8\pi}{\sqrt\epsilon},
$$
where the diameter is computed with respect to the distance in $S$.
\end{itemize}
\end{proposition}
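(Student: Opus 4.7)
The plan is to follow the slice-and-dice scheme of Chodosh--Li \cite{CL2020}, driven throughout by Lemma \ref{Lem: intrinsic cutting}. I calibrate $r_0=2\pi\sqrt{(n-1)/(n\epsilon)}$ with $n=l+3$, so that the curvature loss in intrinsic cutting is exactly $\epsilon$; every hypersurface produced then carries induced $T^{l+1}$-stabilized scalar curvature lower bound $\sigma|_{S_{r_0}}-\epsilon\geq\sigma_0-\epsilon>0$, and hence has diameter at most $2\pi/\sqrt{\sigma_0-\epsilon}$ by Corollary \ref{Cor: diameter estimate}. A torus component of any cutting surface would warp into a PSC metric on $T^{l+3}$, contradicting the Schoen--Yau/Gromov--Lawson torus obstruction, and orientability of $S$ excludes non-orientable pieces; thus every closed cutting surface is a union of $2$-spheres and every cutting surface with boundary is a union of $2$-disks.

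\textbf{Slicing phase.} The spheres $S_1,\dots,S_p$ are produced by iteration on components of $S$ minus the previously chosen spheres. In a current component $U$ I pick a pair of points at distance exceeding the threshold $L_0:=2\pi/\sqrt{\sigma_0-\epsilon}+8\pi/\sqrt{\epsilon}$ (if no such pair exists, skip $U$), and apply Lemma \ref{Lem: intrinsic cutting} with cutoff data separating them. By the previous paragraph, the output is a disjoint union of embedded $2$-spheres, each of which I adjoin to $\{S_i\}$. The integral estimate $\int_{S_i}(\sigma-\epsilon)\,d\mu\leq 8\pi$ follows from integrating the warped-product scalar-curvature identity against the warping weight $\prod u_j$, using integration by parts to kill the $\Delta_{S_i}u_j/u_j$ terms, and invoking the 2D Gauss--Bonnet formula.

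\textbf{Dicing phase.} A component $U$ of $S\setminus\bigcup_iS_i$ may still have large intrinsic diameter even when its boundary sits close in $S$, so I apply Lemma \ref{Lem: intrinsic cutting} to $U$ viewed as a manifold with boundary on some $S_{i_0}$. The cutting produces a hypersurface whose boundary lies transversely on $S_{i_0}$ as a regular level set, and each component is a $2$-disk $D_j$ by the same topological obstruction (the double of a compact stabilized-PSC surface with boundary is a closed stabilized-PSC surface, forcing the piece to be $D^2$). Gauss--Bonnet on a disk, combined with mean convexity of $\partial D_j\times T^{l+1}$ to discard the boundary geodesic-curvature term, yields $\int_{D_j}(\sigma-\epsilon)\,d\mu\leq 4\pi$. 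I iterate until every component of $S\setminus(\bigcup_iS_i\cup\bigcup_jD_j)$ has diameter at most $L_0$.

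\textbf{Termination and main obstacle.} Each chosen sphere or disk has area bounded below by a positive constant $\tau(\sigma_0,\epsilon)$, via a monotonicity/isoperimetric argument from the PSC lower bound, so the iteration terminates in finitely many steps. The final diameter bound then follows: any point $p$ in a complementary component joins in length $\leq 8\pi/\sqrt{\epsilon}$ to the nearest sphere or disk (the factor $1/\sqrt{\epsilon}$ is precisely the $r_0$-scale at which the iteration declined to cut further), and then traverses that $2$-surface of diameter $\leq 2\pi/\sqrt{\sigma_0-\epsilon}$. The principal obstacle I expect is the topological identification in both phases---verifying that each cutting hypersurface has only $S^2$ or $D^2$ components---together with the bookkeeping to ensure the iteration actually terminates, rather than chopping off ever-smaller pieces indefinitely.
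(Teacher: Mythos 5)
The central gap in your proposal is that you have dropped the paper's ``topology reduction'' step and attempted to replace it with a purely metric, distance-driven slicing, which is not the same thing and does not furnish the structure the dicing phase requires. In the paper's proof, one first minimizes area in nontrivial classes of $H_2(S,\mathbf Z)$ (in the conformal/warped setting) to produce pairwise disjoint minimal $2$-spheres $S_1,\dots,S_k$ whose removal makes $i_*:H_2(\partial\hat S,\mathbf Z)\to H_2(\hat S,\mathbf Z)$ surjective; combined with $H_1(\partial\hat S,\mathbf Z)=0$ this forces $H_2(\hat S,\partial\hat S,\mathbf Z)=0$. That vanishing is exactly what guarantees, in the subsequent $\mu$-bubble dicing, that each component of $\hat S-\Omega_1$ meets only one component of $\partial_{\mathrm{int}}\Omega_1$. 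This is what lets the dicing be organized as a nested exhaustion $\Omega_1\subset\Omega_2\subset\cdots$ with $B(\partial_0,2\pi i/\sqrt\epsilon)\subset\Omega_i$, which is both the termination mechanism and the source of the diameter bound $2\pi/\sqrt{\sigma_0-\epsilon}+8\pi/\sqrt\epsilon$ for the complementary pieces. Moreover, since $\hat S$ and its boundary $\partial\hat S=\bigcup S_i$ stay \emph{fixed} throughout the dicing, the free-boundary components of each $\mu$-bubble automatically land transversely on the spheres $S_i$, giving the stipulated structure ``each $D_j$ has $\partial D_j$ on some $S_i$.'' In your scheme you iterate cuts on the current complementary pieces, so a later free-boundary cut can land on an earlier disk $D_j$ rather than on a sphere; without $H_2(\hat S,\partial\hat S,\mathbf Z)=0$ there is also no reason a single cut separates pieces cleanly or that the exhaustion nests. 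Your slicing phase (cut whenever two points are far apart) is a quantitative operation, not a homological one, and does not accomplish what the topology reduction accomplishes.

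Two further points. First, your termination argument is incorrect as stated: a lower area bound on pairwise disjoint codimension-one surfaces in a compact $3$-manifold does not bound their number (think of the parallel slices $\mathbf S^2\times\{\theta\}$ in $\mathbf S^2\times\mathbf S^1$, all of the same area). The paper's finiteness in the topology reduction comes from curvature estimates for stable minimal surfaces (\cite{SSY1975,SS1981}) and graphical convergence, which forces homology repetition; finiteness in the dicing comes from the nested ball containment $B(\partial_0,2\pi i/\sqrt\epsilon)\subset\Omega_i$ together with compactness of $\hat S$. Second, your clean separation into a ``slicing phase producing only spheres'' followed by a ``dicing phase producing only disks'' does not match reality: each $\mu$-bubble in the dicing phase of the paper typically has both closed components (additional spheres, added to the list $\{S_i\}$) and free-boundary components (disks $D_j$) simultaneously, and the identification of components as $S^2$ or $D^2$ follows directly from the Gauss--Bonnet computation $\int(\sigma-\epsilon)\leq 4\pi\chi$ together with $\sigma-\epsilon>0$ and orientability, which is both simpler and less fraught than your doubling heuristic.
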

\begin{proof}
Let us go through the proof from \cite{CL2020} for completeness. The proof will be divided into the following two steps.

{\it Step 1. Topology reduction.} We claim that there are pairwise disjoint embedded $2$-spheres $S_1$, ..., $S_{k}$ with diameter no greater than $2\pi/\sqrt{\sigma_0}$ and
$$
\int_{S_i}\sigma|_{S_i}\,\mathrm d\mu \leq 8\pi
$$
such that the inclusion $i:H_2(\partial\hat S,\mathbf Z)\to H_2(\hat S,\mathbf Z)$ is surjective, where $\hat S$ is denoted to be the metric completion of $S-\bigcup_i S_i$. By definition, there are smooth positive functions $u_1$, ..., $u_l$ such that the warped metric
$$
g_{warp}=g_S+\sum_{i=1}^lu_i^2\mathrm d\theta_i^2
$$
has scalar curvature $R(g_{warp})\geq \sigma\geq \sigma_0>0$. If $S$ satisfies $H_2(S,\mathbf Z)=0$, then we are already done. So we just need to deal with the case when $H_2(S,\mathbf Z)\neq0$. After fixing a non-zero class $\beta\in H_2(S,\mathbf Z)$ we can minimize the area functional among all smooth hypersurfaces in manifold $(S\times T^l,g_{warp})$ having the form of $\Sigma\times T^l$ with $[\Sigma]=\beta$. Again the existence of a smooth minimizer is guaranteed by the geometric measure theory. Take one component of this minimizer and we denote it by $S_1\times T^l$. Through a similar argument as in the proof of Lemma \ref{Lem: intrinsic cutting} it is easy to deduce that $(S_1,g_{S_1})$ has $T^{l+1}$-stablized scalar curvature lower bound
$\sigma|_{S_1}$. It follows from the computation as in \cite[Lemma 2.3]{Zhu2020} that
$$
\int_{S_1}\sigma|_{S_1}\,\mathrm d\mu\leq 4\pi\chi(S_1)\leq 8\pi.
$$
In particular, the area of $S_1$ is bounded from above by $8\pi\sigma_{0}^{-1}$. We also know from Corollary \ref{Cor: diameter estimate} that the diameter of $S_1$ is no greater than $2\pi/\sqrt{\sigma_0}$.

From now on there are two possibilities:
either the metric completion $\hat S$ of $S-S_1$ satisfies the desired property and we are done, or the inclusion map $i:H_2(\partial\hat S,\mathbf Z)\to H_2(\hat S,\mathbf Z)$ is not surjective and we can find another class $\beta$ not contained in the image of $i$. The procedure above can be conducted inductively since $\hat S$ is a Riemannian manifold with minimal boundary, where the boundary serves as a barrier for area minimizing problem above. We just need to show that the construction above must terminate after repeated for finite times\footnote{Here we just apply the compactness argument from \cite{CL2020}, which was the only argument available when this paper was written. Later, Bamler, Li and Mantoulidis were able to simplify the proof with a careful analysis on topology, see \cite[Lemma 2.5]{BLM2022}.}. If this is not true, there would be a sequence of $2$-spheres $S_i$ from the construction above such that for any integer $i_0>0$ the class $[S_{i_0}]$ cannot expressed as a linear combination of $\{[S_i]\}_{i=1}^{i_0-1}$ in $H_2(S,\mathbf Z)$. Since all $S_i$ have uniformly bounded area (no greater than $8\pi\sigma_0^{-1}$) and they are stable embedded minimal surface with respect to the conformal metric $\left(\prod_{i=1}^lu_i\right)g_S$, their second fundamental forms are uniformly bounded (refer to \cite{SSY1975} and \cite{SS1981}). Up to a subsequence $S_i$ converges to a limit surface $S_\infty$ as one-sheeted graphs. So there are infinitely many $S_i$ representing the same homology class in $H_2(S,\mathbf Z)$ and this leads to a contradiction.

{\it Step 2. Cutting $\hat S$ into small pieces.} In general, a compact $3$-manifold with stablized scalar curvature lower bound $\sigma_0>0$ may not have diameter bounded from above. So the manifold $\hat S$ obtained above can be quite long and the strategy is to cut it into smaller pieces.

Let us start with a fixed boundary component $\partial_0$ of $\partial\hat S$. In the case when $\hat S$ has no boundary, we deal with $\hat S-B_\delta$ instead of $\hat S$ for a fixed small geodesic ball $B_\delta$. In the following, we divide the discussion into two cases. If $\hat S$ is contained in the $(4\pi/\sqrt{\epsilon})$-neighborhood of $\partial_0$, then it follows from the diameter estimate of $\partial_0$ that the diameter of $\hat S$ is no greater than
$$\frac{2\pi}{\sqrt{\sigma_0}}+\frac{8\pi}{\sqrt\epsilon}.$$
Otherwise the complement $\hat S-B(\partial _0;4\pi/\sqrt{\epsilon})$ is non-empty and we would like to find suitable surface for cutting. Take
$$V=B(\partial _0;4\pi/\sqrt{\epsilon})-B(\partial _0;2\pi/\sqrt{\epsilon}).$$
Let
$$
h:V\to (-\infty,+\infty),\quad x\mapsto \frac{l+2}{l+3}\sqrt \epsilon\tan\left(\frac{\sqrt\epsilon}{2}(\dist(\cdot,\partial_0)-3\pi/\sqrt\epsilon)\right).
$$
Denote $\Omega_0=B(\partial _0;3\pi/\sqrt{\epsilon})$. We can minimize the functional
$$
\mathcal A^h(\Omega)=\mathcal H^{l+2}_{g_{warp}}(\partial^*\Omega\times T^l)-\int_{\hat S\times T^l}(\chi_\Omega-\chi_{\Omega_0})h\,\mathrm d\mathcal H^{l+3}_{g_{warp}}
$$
among the following collection of Caccioppoli sets
\begin{equation*}
\mathcal C=\{\text{Caccioppoli sets $\Omega$ in $\hat S$ such that $\Omega\Delta \Omega_0\Subset V$}\}.
\end{equation*}
From geometric measure theory we can find a smooth minimizer $\Omega_{min}$ whose boundary $\partial\Omega_{min}$ is an embedded surface possibly with free boundary. Also $\partial\Omega_{min}$ is homologous to $\partial_0$ in $H_2(\hat S,\partial\hat S,\mathbf Z)$. Similar as in Lemma \ref{Lem: intrinsic cutting}, $\partial\Omega_{min}$ has $T^{l+1}$ stablized scalar curvature lower bound
$$
\sigma|_{\partial\Omega_{min}}-\frac{l+2}{l+3}\epsilon\geq \sigma_0-\epsilon>0.
$$
Again it follows from the computation as in \cite[Lemma 2.3]{Zhu2020} that for each component $\Sigma_{min}$ of $\partial\Omega_{min}$ it holds
$$
\int_{\Sigma_{min}}(\sigma|_{\Sigma_{min}}-\epsilon)\leq 4\pi\chi(\Sigma_{min}).
$$
Moreover, we conclude from Corollary \ref{Cor: diameter estimate} that each component of $\partial\Omega_{min}$ has diameter no greater than $2\pi/\sqrt{\sigma_0-\epsilon}$.

Since the region $\Omega_{min}$ may have multiple components, we take $\Omega_1$ to be the component containing $\partial_0$. In the following, we denote $\partial_{int}\Omega=\partial\Omega-\partial\hat S$ for any region $\Omega\subset \hat S$. From the construction we see that $B(\partial_0,2\pi/\sqrt{\epsilon})\subset\Omega_1$. With the help of the topology reduction we claim that each component of $\hat S-\Omega_1$ contains only one component of $\partial_{int}\Omega_1$ in its boundary. Otherwise we can find a simple closed curve in $\hat S$ such that it intersects some component of $\partial_{int}\Omega_1$ only once. This just yields $H_2(\hat S,\partial\hat S,\mathbf Z)\neq 0$. However we have the exact sequence
$$
H_2(\partial\hat S,\mathbf Z)\to H_2(\hat S,\mathbf Z)\to H_2(\hat S,\partial\hat S,\mathbf Z)\to H_1(\partial\hat S,\mathbf Z),
$$
where the surjectivity of the first map along with the fact $H_1(\partial\hat S,\mathbf Z)=0$ yields $H_2(\hat S,\partial\hat S,\mathbf Z)=0$. This leads to a contradiction.

Now the procedure above can be repeated for each component of $\hat S-\Omega_1$ and inductively we end up with an exhaustion
$$
\Omega_1\subset \Omega_2\subset \cdots \subset\Omega_m\subset\hat S,
$$
where the finiteness of this exhaustion comes from the facts $B(\partial_0,2\pi i/\sqrt\epsilon)\subset \Omega_i$ and that $\hat S$ has bounded diameter. The proof is now completed by taking $S_i$ to be those $2$-spheres coming from the topology reduction and $\partial_{int}\Omega_i$, and taking $D_j$ to be those disks from $\partial_{int}\Omega_i$ (see Figure \ref{Fig: 1}).
\end{proof}
\begin{figure}[htbp]
\centering
\includegraphics[width=9cm]{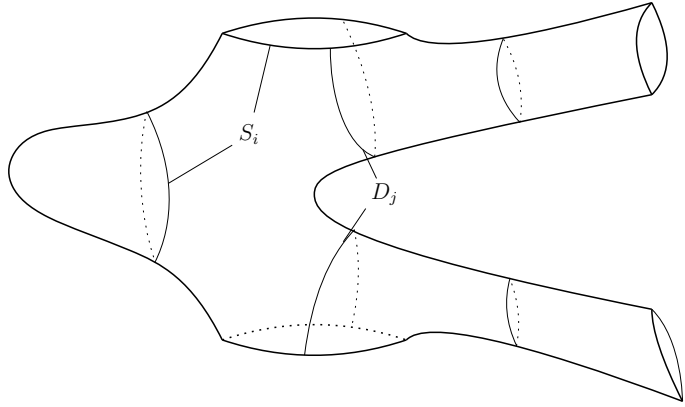}
\caption{The slice-and-dice procedure}
\label{Fig: 1}
\end{figure}

%After applying above proposition to $S_{r_0}$ we conclude
%\begin{corollary}

%\end{corollary}

\section{Proof of main theorems}
\subsection{Selection argument}
\begin{definition}
We say that a Riemannian manifold $(M,g)$ satisfies the uniform filling property if there is a function $F:(0,+\infty) \to (0,+\infty)$ such that if $C$ is a cycle contained in some geodesic ball $B_r(p)$ homologous to zero in $H_k(M,\mathbf Z)$, then it is also homologous to zero in $H_k(B_{F(r)}(p),\mathbf Z)$.
\end{definition}
\begin{proposition}\label{Prop: selection}
Let $(\hat M^5,\hat g)$ be a complete Riemannian manifold with $C^0$-bounded geometry, uniform filling property and $H_k(\hat M,\mathbf Z)=0$ for all $k\geq 3$. Assume that $(\hat M,\hat g)$ contains a geodesic line $\hat\gamma$ and has $T^l$-stablized scalar curvature lower bound $\sigma\geq \sigma_0>0$ for some constant $\sigma_0$. Then for any $\tilde\epsilon>0$ we can find a homotopically non-trivial piecewise smooth $2$-sphere $S_\epsilon$ in $\hat M$ such that
$$
\int_{S_{\tilde\epsilon}}\sigma|_{S_{\tilde \epsilon}}\,\mathrm d\mu\leq 8\pi+\tilde\epsilon.
$$
In particular we have $\pi_2(\hat M)\neq 0$.
\end{proposition}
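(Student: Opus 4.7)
The plan is to combine the cutting machinery of Section~2 with a homological selection argument. First, using the geodesic line $\hat\gamma$ and two smoothed distance functions, apply the extrinsic cutting lemma (Lemma~\ref{Lem: extrinsic cutting}) with large parameters $c_1 \gg c_2 \gg 1$. Since $H_{n-1}(\hat M,\mathbf Z)=H_4(\hat M,\mathbf Z)=0$ by hypothesis, this produces a closed oriented smooth $3$-manifold $S\subset \hat M$, and Lemma~\ref{Lem: filling radius} then guarantees $\fillrad(S;\hat M)\geq R:=\min(c_1-2c_2-3,\,c_2-1)$, which we arrange to be as large as needed. The pasting lemma (Lemma~\ref{Lem: pasting}) together with the $C^0$-bounded geometry of $\hat M$ then yields a compact, oriented, smoothly embedded area-minimizing $4$-dimensional hypersurface $\Sigma_{min}$ with $\partial\Sigma_{min}=S$. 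Applying the standard conformal-warping construction via the Dirichlet first eigenfunction of the Jacobi operator of $\Sigma_{min}\times T^l$, we upgrade the $T^l$-stabilized scalar curvature lower bound $\sigma$ of $\hat M$ to a $T^{l+1}$-stabilized scalar curvature lower bound on $\Sigma_{min}$ satisfying $\sigma|_{\Sigma_{min}}\geq \sigma_0$.

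Next, fix $r_0>0$ large, to be chosen depending on $\tilde\epsilon$. Since distances in $\Sigma_{min}$ dominate distances in $\hat M$, we have $\fillrad(S;\Sigma_{min})\geq \fillrad(S;\hat M)\geq R\gg r_0$, so the intrinsic cutting lemma (Lemma~\ref{Lem: intrinsic cutting}) applied to $\Sigma_{min}$ (noting $k=4\leq 7$) produces a closed embedded $3$-manifold $S_{r_0}\subset B_{r_0}^{\Sigma_{min}}(S)$ homologous to $S$ in $\Sigma_{min}$, carrying a $T^{l+2}$-stabilized scalar curvature lower bound at least $\sigma_0-16\pi^2/(5r_0^2)$, which lies within any prescribed $\delta$ of $\sigma_0$ when $r_0$ is large. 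Moreover, since $S$ and $S_{r_0}$ cobound the $4$-chain $\Omega_{min}$ of thickness $\leq r_0$ inside $\hat M$, one has $\fillrad(S_{r_0};\hat M)\geq \fillrad(S;\hat M)-r_0$, still arbitrarily large. Applying the slice-and-dice result (Proposition~\ref{Prop: slice and dice}) to the closed $3$-manifold $S_{r_0}$ with $\epsilon>0$ small then yields embedded $2$-spheres $S_1,\ldots,S_p$ and $2$-disks $D_1,\ldots,D_q$ of uniformly bounded diameter, satisfying $\int_{S_i}\sigma|_{S_i}\,d\mu\leq 8\pi+O(\delta+\epsilon)$ after absorbing the Gauss--Bonnet corrections, and cutting $S_{r_0}$ into pieces $P_\alpha$ of uniformly bounded diameter.

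The remaining step is to produce a homotopically nontrivial piecewise smooth $2$-sphere with $\int\sigma\leq 8\pi+\tilde\epsilon$ out of these $S_i$'s and $D_j$'s. Argue by contradiction: suppose every candidate $2$-sphere, namely each $S_i$ together with each $2$-sphere obtained by gluing some $D_j$ to a sub-disk of its ambient $S_{i(j)}$ along $\partial D_j$, is null-homotopic in $\hat M$. Since $\hat M$ is simply connected, Hurewicz gives null-homology, and the uniform filling property provides $3$-chain fillings in metric balls of uniformly bounded radius. Exploiting the nested exhaustion $\Omega_1\subset\cdots\subset\Omega_m$ from the proof of Proposition~\ref{Prop: slice and dice}, inductively select coherent $3$-chain fillings $U_\alpha$ of the boundary $2$-cycles $\partial P_\alpha$, chosen so that $U_\alpha$'s attached to adjacent pieces cancel along their shared cutting surface. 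For each piece, the closed $3$-cycle $P_\alpha-U_\alpha$ has bounded diameter, and since $H_3(\hat M,\mathbf Z)=0$ together with uniform filling yields a $4$-chain $V_\alpha$ of bounded radius with $\partial V_\alpha=P_\alpha-U_\alpha$. Summing, $\partial\sum_\alpha V_\alpha=S_{r_0}$ with support in a bounded-radius neighborhood of $S_{r_0}$, contradicting the lower bound on $\fillrad(S_{r_0};\hat M)$.

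The main obstacle is the coherent bookkeeping in the selection step: the fillings $U_\alpha$ must be chosen so that their contributions cancel on every shared portion of a cutting surface, otherwise $\sum U_\alpha$ would be a $3$-cycle of large diameter whose filling cannot be controlled. This rigidity is overcome by noticing that each cutting sphere $S_i$ separates two adjacent pieces and that each disk $D_j$ cobounds in two topologically distinct ways with sub-disks of $S_{i(j)}$; the tree-like nesting of $\{\Omega_j\}$ then provides a natural ordering along which the induction can be carried out, with the $H_k(\hat M,\mathbf Z)=0$ condition for $k\geq 3$ supplying the homological flexibility at every stage. Once the contradiction is reached, one of the candidate spheres $S_{\tilde\epsilon}$ is homotopically nontrivial, yielding $\pi_2(\hat M)\neq 0$ and the stated integral estimate by choosing $\epsilon$, $\delta$ (and hence $r_0$, $c_1$, $c_2$) so that the accumulated error is at most $\tilde\epsilon$.
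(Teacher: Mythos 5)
Your construction up through slice-and-dice (extrinsic cutting of $\hat M$ along a geodesic line via Lemma~\ref{Lem: extrinsic cutting}, the filling-radius lower bound from Lemma~\ref{Lem: filling radius}, area-minimizing pasting from Lemma~\ref{Lem: pasting}, intrinsic cutting from Lemma~\ref{Lem: intrinsic cutting}, then Proposition~\ref{Prop: slice and dice}) coincides with the paper's, and your explicit observation that $\fillrad(S;\Sigma_{min})\geq\fillrad(S;\hat M)$, which resolves the alternative in the intrinsic cutting lemma, is a valid step the paper leaves implicit. But the selection step has a gap. You argue by contradiction as the paper does, but you insist on choosing the fillings $U_\alpha$ of the piece boundaries \emph{coherently} so that $\sum_\alpha U_\alpha=0$, then fill each bounded-diameter $3$-cycle $P_\alpha-U_\alpha$ using $H_3(\hat M,\mathbf Z)=0$. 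That coherence condition is the crux and it does not come for free: adjacent pieces do \emph{not} share boundary components as whole $2$-cycles. Around a cut sphere $S_i$ with a dicing disk $D_j$ one sees three distinct boundary spheres, $S_i$, $S_i^+\cup D_j$, $S_i^-\cup D_j$, each a boundary component of just one piece; independent fillings of these three need not satisfy $V_{S_i}=V_{S_i^+\cup D_j}+V_{S_i^-\cup D_j}$, so $\sum U_\alpha$ can be a $3$-cycle of uncontrolled diameter and the argument stalls. You acknowledge this but only gesture at ``tree-like nesting'' and ``homological flexibility''; in particular, if one defines the coarser fillings inductively as sums of finer ones, one must verify the supports stay uniformly bounded, and this is not done.

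The paper avoids the issue entirely. It fills each boundary component $(\partial U_k)_\tau$ with an arbitrary chain $V_{k,\tau}$ of bounded support, notices that \emph{both} $U_k+\sum_\tau V_{k,\tau}$ (for each piece $k$) \emph{and} $\sum_{\mathcal C_j}V_{k,\tau}$ (the sum of fillings associated with the cut sphere $S_j$) are $3$-cycles of bounded diameter, fills them via $H_3(\hat M,\mathbf Z)=0$ and the uniform filling property to get chains $W_k$, $X_j$, and then invokes the bookkeeping identity
$$
S_{r_0}=\sum_k U_k=\sum_k\Bigl(U_k+\sum_\tau V_{k,\tau}\Bigr)-\sum_j\sum_{\mathcal C_j}V_{k,\tau}=\partial\Bigl(\sum_k W_k-\sum_j X_j\Bigr),
$$
which is valid for \emph{any} choice of the $V_{k,\tau}$ (all one needs is that each $V_{k,\tau}$ is counted once on the right, which the slice-and-dice structure guarantees); no cancellation condition is imposed. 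This is the key device you would need to either adopt, or replace with a fully carried-out coherent induction. Two minor slips: $\hat M$ is not assumed simply connected here, so the appeal to Hurewicz is misplaced (though also unnecessary, since null-homotopic always implies trivial in $H_2$); and your stated stabilized lower bound $\sigma_0-16\pi^2/(5r_0^2)$ implicitly takes $l=0$, whereas the general expression is $\sigma_0-4(l+4)\pi^2/((l+5)r_0^2)$.
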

\begin{proof}
From differential topology it is standard to construct smooth functions $\hat\rho_1$ and $\hat\rho_2$ such that
$$
|\hat\rho_1(\cdot)-\dist(\hat\gamma(0),\cdot)|\leq 1
$$
and
$$
|\hat \rho_2(\cdot)-\dist(\hat\gamma([0,+\infty)),\cdot)|\leq 1.
$$
The audience can refer to \cite[Theorem 2.2 on page 44]{Hirsch1976} for details.
Let $L$ be a large positive constant to de determined later. Then we take $c_2$ to be a regular value of $\hat\rho_2$ contained in $(L+1,L+2)$ and $c_1$ to be a regular value of $\hat\rho_1$ contained in $(3L+7,3L+8)$. It follows from Lemma \ref{Lem: extrinsic cutting} and Lemma \ref{Lem: filling radius} that we can find an embedded closed 3-submanifold $S\subset \hat M$ homologous to zero in $H_3(\hat M,\mathbf Z)$ such that
$$\fillrad(S,\hat M)\geq \min\{c_1-2c_2-3,c_2-1\}\geq L.$$
From Lemma \ref{Lem: pasting} there is an embedded area-minimizing hypersurface $\Sigma$ with $\partial \Sigma=S$. In particular, $\Sigma$ has $T^{l+1}$-stablized scalar curvature lower bound $\sigma|_\Sigma\geq\sigma_0>0$.

Let $r_0$ be another positive constant to be determined later.
It follows from Lemma \ref{Lem: intrinsic cutting} that we can find an embedded hypersurface $S_{r_0}$ homologous to $S$ in $\Sigma$ such that $S_{r_0}$ is contained in $B_{r_0}(S)$ and it has $T^{l+2}$-stablized scalar curvature lower bound
\begin{equation*}
\sigma|_{S_{r_0}}-\frac{4(l+4)\pi^2}{(l+5)r_0^2}.
\end{equation*}
Take
$$
\epsilon=\frac{4\pi^2}{(l+5)r_0^2}.
$$
From Proposition \ref{Prop: slice and dice} we can find finitely many embedded $2$-spheres $S_1$, ..., $S_p$ and embedded $2$-disks $D_1$, ..., $D_q$ in $S_{r_0}$ such that
\begin{itemize}
\item the diameters of all spheres $S_i$ and disks $D_j$ with respect to induced metric are bounded from above by
$$
\Lambda_1:=2\pi\left(\sigma_0-\frac{4\pi^2}{r_0^2}\right)
;$$
\item we have
$$
\int_{S_i}\left(\sigma|_{S_i}-\frac{4\pi^2}{r_0^2}\right)\,\mathrm d\mu\leq 8\pi\mbox{ and } \int_{D_j}\left(\sigma|_{D_j}-\frac{4\pi^2}{r_0^2}\right)\,\mathrm d\mu\leq 4\pi;
$$
\item spheres $S_i$ are pairwise disjoint, disks $D_j$ are also pairwise disjoint but intersect one of $S_i$ transversely on its boundary $\partial D_j$;
\item the diameter of each component of the complement
\begin{equation*}
\hat S:=S_{r_0}-\left(\bigcup_{i=1}^k S_i\right)\cup\left(\bigcup_{j=1}^p D_j\right)
\end{equation*}
is no greater than
$$
\Lambda_2:=2\pi\left(\sigma_0-\frac{4\pi^2}{r_0^2}\right)+4r_0,
$$
where the diameter is computed with respect to the distance in $S_{r_0}$.
\end{itemize}
Let us denote $U_1,\ldots, U_m$ to be the components of the complement $\hat S$. Then the boundary components of $U_k$ can be divided into two classes: an entire $2$-sphere $S_i$ or the union of part of $S_i$ and several disks $D_j$. In each case, the diameter of these boundary components in $S_{r_0}$ cannot exceed $3\Lambda_1$. For later use let us denote $(\partial U_k)_\tau$, $\tau=1,2,\ldots, n_k$, to be all boundary components of $U_k$.

Now we turn to the selection argument for $2$-spheres $S_i$ and disks $D_j$ from the slice-and-dice argument. The discussion will be divided into two cases.

{\it Case 1. There is some $2$-sphere $S_{i}$ homotopically non-trivial in $\hat M$.} Then for this $S_i$ we have
\begin{equation}\label{Eq: GB}
\int_{S_i}\sigma|_{S_i}\,\mathrm d\mu \leq 8\pi\left(1-\frac{4\pi^2}{\sigma_0 r_0^2}\right)^{-1}.
\end{equation}

{\it Case 2. All $2$-spheres $S_i$ are homotopically trivial.} In this case, we divide the discussion further into two subcases.

{\it Case 2a. There is some $2$-sphere $S_i$ separated into two parts $S_i^+$ and $S_i^-$ by some disk $D_j$  such that the $2$-spheres $S_i^+\cup D_j$ and $S_i^-\cup D_j$ are homotopically non-trivial in $\hat M$.} In this case, we point out that $S_i^+\cup D_j$ or $S_i^-\cup D_j$ serves as an candidate for the desired $2$-sphere $S_{\tilde\epsilon}$.
Notice that
$$
\int_{S_i^+\cup D_j}\sigma|_{S_i^+\cup D_j}\,\mathrm d\mu+\int_{S_i^+\cup D_j}\sigma|_{S_i^-\cup D_j}\,\mathrm d\mu\leq 16\pi\left(1-\frac{4\pi^2}{\sigma_0 r_0^2}\right)^{-1}.
$$
As a result at least one of $S_i^+\cup D_j$ and $S_i^-\cup D_j$ satisfies the inequality \eqref{Eq: GB}.

{\it Case 2b. For all $S_i$ separated into two parts $S_i^+$ and $S_i^-$ by some disk $D_j$, one of $S_i^+\cup D_j$ and $S_i^-\cup D_j$ is homotopically trivial.} Since all $S_i$ are homotopically trivial, all $S_i^+\cup D_j$ and $S_i^-\cup D_j$ have to be homotopically trivial at the same time. In particular, each boundary component $(\partial U_k)_\tau$ of $U_k$ is homologous to zero in $\hat M$ since it can be expressed as a sum of some $S_i$ and $S_i^\pm\cup D_j$. From the uniform filling property of $\hat M$ we conclude that $(\partial U_k)_\tau$ can be filled with some chain $V_{k,\tau}$ in $F(3\Lambda_1)$-neighborhood of $(\partial U_k)_\tau$. It is not difficult to see
\begin{itemize}
\item for each $k$ the sum
$$
U_k+\sum_{\tau=1}^{n_k}V_{k,\tau}
$$
is a $3$-cycle contained in a geodesic ball centered at some point in $U_k$ with radius $\Lambda_2+2F(3\Lambda_1)$.
\item if for each $j$ we denote
$$\mathcal C_j=\{V_{k,\tau}:V_{k,\tau} \mbox{ has non-empty intersection with } S_j\} ,$$ then the sum
$$
\sum_{\mathcal C_j}V_{k,\tau}
$$
is a $3$-cycle contained in a geodesic ball centered at some point in $S_j$ with radius $\Lambda_2+2F(3\Lambda_1)$. Here we use the fact $\Lambda_1\leq\Lambda_2$.
\end{itemize}
Combined with the $H_k$-vanishing condition of $\hat M$ for $k\geq 3$ we conclude that that the $3$-cycles $U_k+\sum_{\tau=1}^{n_k} V_{k,\tau}$ and $\sum_{\mathcal C_j}V_{k,\tau}$ can be filled with some chain $W_k$ and $X_j$ respectively in the tubular neighborhood of $S_{r_0}$ with radius $F\left(\Lambda_2+2F(3\Lambda_1)\right)$. As a consequence we have
\[
\begin{split}
S_{r_0}=\sum_{k=1}^m U_k=\sum_{k=1}^m\left(U_k+\sum_{\tau=1}^{n_k}V_{k,\tau}\right)-\sum_{j=1}^p\sum_{\mathcal C_j}V_{k,\tau}=\partial\left(\sum_{k=1}^m W_k-\sum_{j=1}^p X_j\right).
\end{split}
\]
Since $S_{r_0}$ is contained in the $r_0$-neighborhood of $S$, we see that
$$
L\leq \fillrad(S,\hat M)\leq r_0+F\left(\Lambda_2+2F(3\Lambda_1)\right).
$$

Now we complete the proof as follows. For each $\tilde\epsilon >0$, we take $r_0$ to be a large constant such that
$$
\left(1-\frac{4\pi^2}{\sigma_0 r_0^2}\right)^{-1}\leq 1+\tilde\epsilon.
$$
Then we take $L$ to be a large constant such that
$$
L> r_0+F\left(\Lambda_2+2F(3\Lambda_1)\right).
$$
Now we see that only Case 1 and Case 2a can happen and so there is one homotopically non-trivial piecewise smooth $2$-sphere $S_{\tilde \epsilon}$ (given by some $S_i$ or $S_i^\pm\cup D_j$) that satisfies
$$
\int_{S_{\tilde\epsilon}}\sigma|_{S_{\tilde \epsilon}}\,\mathrm d\mu\leq 8\pi+\tilde\epsilon.
$$
This completes the proof.
\end{proof}
\subsection{Proof of Theorem \ref{Thm: main}} We begin with the following lemma.
\begin{lemma}\label{Lem: geometry of cover}
Let $(M,g)$ be a closed Riemannian manifold. Assume that $(\hat M,\hat g)$ is a regular covering space of $(M,g)$. Then $(\hat M,\hat g)$ has $C^0$-bounded geometry and the uniform filling property. If $(\hat M,\hat g)$ is also non-compact, then it contains a geodesic line.
\end{lemma}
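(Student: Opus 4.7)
All three conclusions rest on the fact that $\pi:\hat M\to M$ is a local isometry and that the deck transformation group $G$ acts on $\hat M$ by isometries with compact quotient $M$. For the $C^0$-bounded geometry I would observe that closedness of $M$ supplies a positive injectivity radius $\delta_0$ and a uniform bi-Lipschitz constant $C$ for the normal coordinate charts on $\delta_0$-balls of $M$; since $\pi$ restricts to an isometry on every $B_{\delta_0}(\hat p)\subset\hat M$, these charts lift verbatim to $\hat M$ with the same constants $\delta_0$ and $C$. The geodesic line will follow from a standard Arzel\`a--Ascoli construction combined with the $G$-action: fix $\hat p_0\in\hat M$ and a compact fundamental domain $K\subset\hat M$; since $\hat M$ is non-compact and complete (the latter because $M$ is closed and $\pi$ is a local isometry), pick $\hat q_n\in\hat M$ with $L_n:=\dist(\hat p_0,\hat q_n)\to\infty$ and unit-speed minimizing geodesics $\gamma_n:[0,L_n]\to\hat M$. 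Choosing $g_n\in G$ with $g_n\cdot\gamma_n(L_n/2)\in K$, the reparametrized translates $\tilde\gamma_n(t):=g_n\cdot\gamma_n(t+L_n/2)$ are unit-speed minimizing on $[-L_n/2,L_n/2]$ with $\tilde\gamma_n(0)\in K$, and a subsequential Arzel\`a--Ascoli limit $\gamma_\infty:\mathbf R\to\hat M$ supplies the required line.

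The uniform filling property is the substantive part of the lemma, and my plan is to reduce it to a finite combinatorial statement via a $G$-equivariant triangulation. I would fix a smooth triangulation $T$ of $M$ of mesh less than $\delta_0/4$ and lift it to a $G$-equivariant triangulation $\hat T$ of $\hat M$; the small mesh guarantees that the lift exists and is compatible with the deck action. For any $r>0$, bounded geometry bounds the number of simplices of $\hat T$ meeting $B_r(\hat p)$ by some $N(r)$ independent of $\hat p$, so the subcomplex $X_{\hat p}\subset\hat T$ spanned by those simplices has bounded combinatorial complexity. Acting by a suitable element of $G$ to place $\hat p$ in a fixed compact fundamental domain $K$, the pair $(X_{\hat p},B_r(\hat p))$ then ranges over only finitely many combinatorial types. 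Within each type the kernel of the natural map from simplicial $k$-cycles in $X_{\hat p}$ to $H_k(\hat M,\mathbf Z)$ is a finitely generated abelian group, each of whose finitely many generators bounds some $(k+1)$-chain of bounded diameter in $\hat M$. Taking $F(r)$ to be the maximum such diameter over all types, with a mild adjustment for simplicial approximation of a general smooth input cycle, will yield the desired uniform filling function.

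The main obstacle I anticipate is precisely this middle step. The cycles in the definition of the uniform filling property are a priori singular rather than simplicial, and a naive compactness argument on integer chains fails because their coefficients can be arbitrarily large. What I am counting on is $G$-equivariance: it forces both the combinatorial type of $(X_{\hat p},B_r(\hat p))$ and the kernel above to depend only on the class of $\hat p$ modulo $G$, and the max-over-finitely-many-types move then turns the $r$-dependence into a finite problem. Carefully verifying that the enumeration of types is genuinely finite, and that each kernel is finitely generated with generators that can be filled by chains of uniformly bounded diameter, will be the technical heart of the argument.
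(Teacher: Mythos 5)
Your argument for $C^0$-bounded geometry and for the geodesic line is exactly the standard one; the paper does not spell these out, citing instead the Rauch comparison theorem (for the chart estimates) and Chodosh--Li \cite[Lemma 6 and Proposition 10]{CL2020} (for the filling property and the line). So the real comparison is on the uniform filling property, where the paper simply cites \cite{CL2020} and you give a from-scratch proof via a $G$-equivariant triangulation.

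Your outline for the filling property is sound, and the two obstacles you flag are both surmountable in the way you hope. On singular versus simplicial cycles: after placing $\hat p$ in the fundamental domain, a singular $k$-cycle $C\subset B_r(\hat p)$ can be pushed by simplicial approximation (relative to the fixed triangulation $\hat T$) to a simplicial cycle $z$ in the finite subcomplex $X_{\hat p}$, and the tracking homotopy stays within one mesh-size of $C$, so the replacement only enlarges the radius by the mesh of $\hat T$. On the size of coefficients: this is actually a non-issue for the diameter bound, since the filling of a linear combination $z=\sum a_i z_i$ of generators of the kernel of $Z_k(X_{\hat p})\to H_k(\hat M,\mathbf Z)$ can be taken to be $\sum a_i w_i$ where $w_i$ are fixed fillings of the generators; the support of this chain is $\bigcup_i\mathrm{supp}(w_i)$ regardless of the $a_i$, so only the finitely many diameters $\mathrm{diam}(\mathrm{supp}\,w_i)$ enter $F(r)$. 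The finiteness of combinatorial types also holds for the reason you give: once $\hat p$ lies in a compact fundamental domain $K$, the complex $X_{\hat p}$ is a subcomplex of the fixed finite complex spanned by all simplices of $\hat T$ within distance $r$ of $K$, and there are only finitely many such subcomplexes; deck transformations are isometries acting on $H_k(\hat M,\mathbf Z)$, so the kernel you need depends only on the subcomplex up to the $G$-action. The one small bookkeeping point to add is that $F(r)$ should be taken as $r$ plus the mesh plus the maximal filling diameter, so that the filled chain actually lies inside a metric ball around $p$ rather than merely having bounded diameter. In short: your route is legitimate and self-contained where the paper merely cites, and the anticipated gaps close in exactly the way you predicted.
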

\begin{proof}
The $C^0$-bounded geometry comes from Rauch comparison theorem and the fact that $(\hat M,\hat g)$ has positive injective radius and uniformly bounded curvature. The uniform filling property and the existence of geodesic line in the non-compact case are proven in \cite[Lemma 6 and Proposition 10]{CL2020}.
\end{proof}
Next we prove Theorem \ref{Thm: main}.
\begin{proof}[Proof of Theorem \ref{Thm: main}]
%We shall work with the product manifold
%$$\left(M^n\times T^{5-n},g+\sum_{i=1}^{5-n}\mathrm dt_i^2\right).$$
The proof will be divided into the following cases.

{\it Case 1. $\Ric(g)\equiv 0$.} It follows from the Cheeger-Gromoll splitting theorem that the universal covering $(\hat M,\hat g)$ splits as $(\hat N^k,\hat h)\times \mathbf R^{n-k}$, where $(\hat N,\hat h)$ is a closed, simply connected, Ricci-flat $k$-manifold. From the $H_i$-vanishing condition of $\hat M$ for $i\geq 3$, we see that $k\leq 2$ and so $\hat M$ is isometric to $\mathbf R^n$ or $\hat N$ is a closed, simply connected, flat surface. Clearly the latter case is impossible since the only simply connected closed surface is $2$-sphere and it does not admit any flat metric.

{\it Case 2. $\Ric(g)\nequiv 0$.} For any smooth metric $h$ on $M$ we consider the operator
$$
\mathcal L_h=-\Delta_h+\frac{R(h)-R(g)}{2}
$$
and denote $\lambda_h$ to be the first eigenvalue with respect to $\mathcal L_h$. Let $\zeta$ be an arbitrary $(0,2)$-tensor on $M$ and $g_t=g-2t\zeta$ for $t\in(-\epsilon,\epsilon)$. From \cite[Theorem 1.174]{Besse2008} we have
$$
\left.\frac{\partial}{\partial t}\right|_{t=0}R(g_t)=2\langle\Ric(g),\zeta\rangle_g-2\Div_g(\Div_g\zeta-\mathrm d\tr_g\zeta)
$$
Let $u_t$ be the first eigenfunction with respect to $\mathcal L_{g_t}$ with
$$
\int_{M}u_t^2\,\mathrm d\mu_{g_t}=1.
$$
Clearly we have $u_0=\vol(M,g)^{-1/2}$. It follows from \cite[P. 423-426]{Kato1995} that $\lambda_{g_t}$ and $u_t$ are analytic with respect to $t$. Then we can compute
\begin{equation}\label{Eq: deformation}
\begin{split}
\left.\frac{\mathrm d}{\mathrm dt}\right|_{t=0}\lambda_{g_t}&=\left.\frac{\mathrm d}{\mathrm dt}\right|_{t=0}\int_M|\nabla_{g_t}u_t|^2+\frac{R(g_t)-R(g)}{2}u_t^2\,\mathrm d\mu_{g_t}\\
&=\vol(M,g)^{-1}\int_M\langle \Ric(g),\zeta\rangle_g\,\mathrm d\mu_g.
\end{split}
\end{equation}

In the following, we deal with two subcases.

{\it Case 2a. $\Ric(v,v)<0$ for some unit vector $v\in TM$.} Without loss of generality we can assume that $v$ is an eigenvector of $\Ric(g)$. Denote $\omega$ to be the dual one-form with respect to $v$. Then it is clear that
$$
\langle\Ric(v,v)\omega\otimes\omega,\Ric(g) \rangle_g>0.
$$
From continuity we can extend $\Ric(v,v)\omega\otimes\omega$ to a global non-positive $(0,2)$-tensor $\zeta$ on $M$ such that $\langle\zeta,\Ric(g) \rangle_g\geq 0$. From \eqref{Eq: deformation} we see $\lambda_{g_t}$ is positive for small positive $t$. As a result, $(M,g_t)$ has $T^1$-stablized scalar curvature lower bound $R(g)+2\lambda_{g_t}$. Moreover, we have $g_t\geq g$ as quadratic forms.

Next we just deal with the case when $n=5$ and the rest cases are the same if we consider the product manifold $M^n\times T^{5-n}$ instead. Notice that the universal covering $(\hat M,\hat g_t)$ of $(M,g_t)$ is non-compact due to the $H_k$-vanishing condition for $k\geq 3$. It follows from Lemma \ref{Lem: geometry of cover} that  $(\hat M,\hat g_t)$ satisfies all the hypothesis of Proposition \ref{Prop: selection}.
Fix a small $\tilde\epsilon$ such that
$$
\left(1-\frac{2\lambda_{g_t}}{\max_MR(g)+2\lambda_{g_t}}\right)(8\pi+\tilde\epsilon)<8\pi.
$$
From Proposition \ref{Prop: selection} we can find a piecewise smooth homotopically non-trivial $2$-sphere $S_{\tilde\epsilon}$ in $\hat M$ such that
$$
\int_{S_{\tilde\epsilon}} R(\hat g)+2\lambda_{g_t}\,\mathrm d\mu_{\hat g_t}\leq 8\pi+\tilde\epsilon.
$$
Take $S$ to be the projection of $S_{\tilde\epsilon}$ in $M$. Clearly we have
\[
\begin{split}
\int_{S} R(g)\,\mathrm d\mu_{g}&
=\int_{S_{\tilde\epsilon}} R(\hat g)\,\mathrm d\mu_{\hat g}\\
&\leq\left(1-\frac{2\lambda_{g_t}}{\max_MR(g)+2\lambda_{g_t}}\right)\int_{S_{\tilde\epsilon}} R(\hat g)+2\lambda_{g_t}\,\mathrm d\mu_{\hat g}\\
&\leq\left(1-\frac{2\lambda_{g_t}}{\max_MR(g)+2\lambda_{g_t}}\right)\int_{S_{\tilde\epsilon}} R(\hat g)+2\lambda_{g_t}\,\mathrm d\mu_{\hat g_t}\\
&\leq \left(1-\frac{2\lambda_{g_t}}{\max_MR(g)+2\lambda_{g_t}}\right)(8\pi+\tilde\epsilon)<8\pi.
\end{split}
\]
In this case we have $Q_{GB}(M,g)<8\pi$.

{\it Case 2b. $\Ric(g)\geq 0$ but $\Ric(g)\nequiv 0$.} With a similar argument as in Case 1, we see that the universal covering $(\hat M,\hat g)$ splits as $(\mathbf S^2,\hat h)\times \mathbf R^{n-2}$, where $\hat h$ is a smooth metric on $\mathbf S^2$ with non-negative sectional curvature. Clearly $\pi_2(M)\neq 0$ and after taking $S$ to be the projection of $\mathbf S^2$ we have
$$
\int_{S} R(g)\,\mathrm d\mu_{g}=8\pi.
$$
On the other hand, with the projection map $\hat M\to \mathbf S^2$ it is easy to see that for any homotopically non-trivial $2$-sphere $S$ it holds
$$
\int_{S} R(g)\,\mathrm d\mu_{g}\geq 8\pi.
$$
In this case we have $Q_{GB}(M,g)=8\pi$.

Now we make a conclusion from above argument. The discussion of Case 1 and Case 2 tells us that $(M,g)$ is either flat or $\pi_2(M)\neq 0$. In the latter case, we have $Q_{GB}(M,g)\leq 8\pi$ and moreover the equality holds if and only if the universal covering $(\hat M,\hat g)$ splits as $(\mathbf S^2,\hat h)\times \mathbf R^{n-2}$, where $(\mathbf S^2,\hat h)$ has non-negative sectional curvature.
\end{proof}

\subsection{Proof of Theorem \ref{Thm: main dominated} and Corollary \ref{Cor: systole}}
As a preparation, we modify Proposition \ref{Prop: selection} to the following version.
\begin{proposition}\label{Prop: selection dominated}
Let $(\hat M_0^5,\hat g_0)$ be a complete Riemannian manifold with  $C^0$-bounded geometry, uniform filling property and $H_k(\hat M,\mathbf Z)=0$ for all $k\geq 3$ and it contains a geodesic line $\hat\gamma$. If $(\hat M,\hat g)$ is a complete Riemannian manifold with $C^0$-bounded geometry, which admits a proper globally Lipschitz map $\hat f:\hat M\to \hat M_0$ with non-zero degree and has $T^l$-stablized scalar curvature lower bound $\sigma\geq \sigma_0>0$ for some constant $\sigma_0$, then for any $\tilde\epsilon>0$ we can find a homotopically non-trivial piecewise smooth $2$-sphere $S_\epsilon$ in $\hat M$ such that
$$
\int_{S_{\tilde\epsilon}}\sigma|_{S_{\tilde \epsilon}}\,\mathrm d\mu_{\hat g}\leq 8\pi+\tilde\epsilon.
$$
In particular we have $\pi_2(\hat M)\neq 0$.
\end{proposition}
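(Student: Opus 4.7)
The strategy is to run the proof of Proposition \ref{Prop: selection} in $\hat M$ while delegating every topological check to $\hat M_0$ via the degree-$d$ map $\hat f$. Let $K$ be a Lipschitz constant for $\hat f$. First construct smooth functions $\hat\rho_1,\hat\rho_2$ on $\hat M_0$ from the geodesic line $\hat\gamma$ as in the proof of Proposition \ref{Prop: selection}, and define $\rho_i:\hat M\to\mathbf R$ as smooth approximations of $\hat\rho_i\circ\hat f$; these inherit an effective $K$-Lipschitz bound. For generic regular values $c_2\in(L+1,L+2)$ of $\rho_2$ and $c_1\in(3L+7,3L+8)$ of $\rho_1$ (with $L$ large, to be chosen), set
\[
\Sigma:=\rho_1^{-1}((-\infty,c_1])\cap\rho_2^{-1}(c_2)\subset\hat M,
\]
a compact smooth hypersurface with smooth boundary $S:=\rho_1^{-1}(c_1)\cap\rho_2^{-1}(c_2)$.

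The essential new ingredient is a filling radius lower bound for $S$ in $\hat M$ obtained by pushforward. Applying Lemma \ref{Lem: extrinsic cutting} and Lemma \ref{Lem: filling radius} to $\hat\rho_1,\hat\rho_2$ in $\hat M_0$ (using $H_4(\hat M_0,\mathbf Z)=0$) produces a cycle $\hat S_0\subset \hat M_0$, close to $\hat f(S)$, satisfying $\fillrad(k\hat S_0;\hat M_0)\geq \min\{c_1-2c_2-3,c_2-1\}$ for all $k\neq 0$. After smoothing $\hat f$ and choosing regular values compatibly, the pushforward $\hat f_*S$ is homologous to $d\cdot[\hat S_0]$ in a neighborhood of controlled size, by the standard degree formula. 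Hence any chain $\hat C\subset B_r(S)\subset\hat M$ realizing $\partial\hat C=kS$ pushes forward to $\hat f_*\hat C\subset B_{Kr}(\hat f(S))$ with $\partial(\hat f_*\hat C)=kd\cdot[\hat S_0]$ up to a perturbation of size $O(1)$, and the bound in $\hat M_0$ forces
\[
\fillrad(S;\hat M)\geq K^{-1}\min\{c_1-2c_2-3,c_2-1\}-O(1)\geq K^{-1}L-O(1).
\]

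With this lower bound, the rest of the proof transcribes Proposition \ref{Prop: selection} into $\hat M$. Apply Lemma \ref{Lem: pasting} to obtain a smooth area-minimizing $\Sigma_{min}\subset\hat M$ with $\partial\Sigma_{min}=S$; since $\fillrad(S;\Sigma_{min})\geq\fillrad(S;\hat M)$, choosing $L$ large allows Lemma \ref{Lem: intrinsic cutting} to yield $S_{r_0}\subset\Sigma_{min}$ with the correct $T^{l+2}$-stabilized scalar curvature bound. Then Proposition \ref{Prop: slice and dice} produces spheres $S_i$ and disks $D_j$ in $S_{r_0}$, and we run the same three-case selection, now testing homotopy non-triviality of the compositions $\hat f\circ S_i$ and $\hat f\circ(S_i^\pm\cup D_j)$ in $\hat M_0$. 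In Cases 1 or 2a, the functoriality of $\pi_2$ immediately produces a candidate $S_{\tilde\epsilon}$ non-trivial in $\pi_2(\hat M)$ satisfying the required Gauss-Bonnet bound. In Case 2b every candidate is null-homotopic in $\hat M_0$, so the uniform filling property of $\hat M_0$ fills each $\hat f((\partial U_k)_\tau)$ by a chain of controlled size; combining these with $H_k(\hat M_0,\mathbf Z)=0$ for $k\geq 3$ exactly as in the original proof, we obtain a filling of $\hat f_*S_{r_0}$ within a small tubular neighborhood of $\hat S_0$, contradicting the filling radius bound on $d\cdot[\hat S_0]$ once $L$ is chosen large enough.

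The main technical delicacy is the pushforward filling radius estimate: one must verify that, after smoothing $\hat f$ and $\hat\rho_i\circ\hat f$, the pushforward $\hat f_*S$ genuinely represents $d\cdot[\hat S_0]$ in a neighborhood of the right size, so that Lemma \ref{Lem: filling radius} can be transported to $\hat M$ through $\hat f$. This amounts to careful degree-theoretic bookkeeping for proper Lipschitz maps; once this is in place, the remaining steps are verbatim adaptations of those in Proposition \ref{Prop: selection}.
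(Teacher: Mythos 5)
Your proof is essentially correct and follows the same overall strategy as the paper: set up the functions $\hat\rho_1,\hat\rho_2$ on $\hat M_0$, transport the cutting data back to $\hat M$ via $\hat f$, run Lemma~\ref{Lem: pasting}, Lemma~\ref{Lem: intrinsic cutting}, Proposition~\ref{Prop: slice and dice} and the three-case selection in $\hat M$, and in Case~2b push the purported fillings forward to $\hat M_0$ to contradict the filling radius lower bound on $\deg\hat f\cdot S_0$. The one place you genuinely reorganize the argument is that you front-load an explicit filling-radius lower bound for $S$ in $\hat M$, whereas the paper never makes such an intrinsic estimate explicit; it instead packages the same pushforward computation into the final inequality $L\leq r_0\Lip\hat f+F(\Lambda_2\Lip\hat f+F(3\Lambda_1\Lip\hat f))$, which simultaneously handles the alternative of Lemma~\ref{Lem: intrinsic cutting} (the case $\fillrad(S;\Sigma)\leq r_0$) and Case~2b. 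Your version is a bit more modular and makes the transport of the filling-radius obstruction transparent, at the cost of having to justify the degree-theoretic identity $\hat f_*S\sim\deg\hat f\cdot S_0$ within a controlled neighborhood.

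On the construction of $S$, the paper does it slightly differently from you: rather than smoothing $\hat\rho_i\circ\hat f$ and taking level sets, the paper perturbs $\hat f$ to a smooth map $\hat f_1$ with $|\hat f_1-\hat f|\leq 1$ transverse to both $S_0=\hat\rho_1^{-1}(c_1)\cap\hat\rho_2^{-1}(c_2)$ and $\Sigma_0=\hat\rho_1^{-1}((-\infty,c_1])\cap\hat\rho_2^{-1}(c_2)$, and sets $S:=\hat f_1^{-1}(S_0)$. This gives $S=\partial\hat f_1^{-1}(\Sigma_0)$ for free, so the nullhomologousness of $S$ (needed for Lemma~\ref{Lem: pasting}) and the local degree statement $\hat f_{1,*}[S]=\deg\hat f\cdot[S_0]$ are immediate, avoiding the regular-value compatibility issue in your level-set construction: a regular value of your smoothed $\rho_i$ need not be a regular value of $\hat\rho_i$, so extra care is needed to ensure the two pictures line up. Your acknowledgement that the "careful degree-theoretic bookkeeping" is the main technical point is accurate; the paper's transverse-preimage formulation is the cleanest way to dispose of that point, and you may want to adopt it.
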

\begin{proof}
As before we take smooth functions $\hat\rho_1$ and $\hat \rho_2$ such that
$$
|\hat\rho_1(\cdot)-\dist(\hat\gamma(0),\cdot)|\leq 1
$$
and
$$
|\hat \rho_2(\cdot)-\dist(\hat\gamma([0,+\infty)),\cdot)|\leq 1.
$$
Let $L$ be a large positive constant to be determined later. We take $c_2$ to be a regular value of $\hat\rho_2$ contained in $(L+3,L+4)$ and $c_1$ to be a regular value of $\hat\rho_1$ contained in $(3L+13,3L+14)$. It follows from Lemma \ref{Lem: extrinsic cutting} and Lemma \ref{Lem: filling radius} that we can find an embedded closed 3-submanifold $S_0\subset \hat M_0$ homologous to zero in $H_3(\hat M,\mathbf Z)$ such that
$$\fillrad(\deg \hat f\cdot S_0,\hat M_0)\geq \min\{c_1-2c_2-3,c_2-1\}\geq L+2.$$
From our construction the submanifold $S_0$ is the boundary of the smooth hypersurface $\Sigma_0:=\hat\rho_1^{-1}((-\infty,c_1])\cap \hat\rho_2^{-1}(c_2)$. From differential topology we can pick up a smooth map $\hat f_1$ homotopic to $\hat f$ such that $|\hat f_1-\hat f|\leq 1$ and $\hat f_1$ is transverse to $S_0$ and $\Sigma_0$.
Denote $S=\hat f_1^{-1}(S_0)$. Then $S$ is homologous to zero since we have $\hat f_1^{-1}(S_0)=\partial \hat f_1^{-1}(\Sigma_0)$. %Notice that we have
%$$\fillrad(S,\hat M)\geq(\Lip \hat f)^{-1} L.$$
From Lemma \ref{Lem: pasting} we can find a smooth embedded hypersurface $\Sigma$ with $\partial\Sigma=S$ and $T^{l+1}$-stablized scalar curvature lower bound $\sigma\geq \sigma_0>0$.

Let $r_0$ be another positive constant to be determined later. It follows from Lemma \ref{Lem: intrinsic cutting} that we can find an embedded hypersurface $S_{r_0}$ homologous to $S$ in $\Sigma$ such that $S_{r_0}$ is contained in $B_{r_0}(S)$ and it has $T^{l+2}$-stablized scalar curvature lower bound
\begin{equation*}
\sigma|_{S_{r_0}}-\frac{4(l+4)\pi^2}{(l+5)r_0^2}.
\end{equation*}
Take
$$
\epsilon=\frac{4\pi^2}{(l+5)r_0^2}.
$$
 It follows from Proposition \ref{Prop: slice and dice} that we can find finitely many embedded $2$-spheres $S_1$, ..., $S_p$ and embedded $2$-disks $D_1$, ..., $D_q$ in $S_{r_0}$ such that
\begin{itemize}
\item the diameters of all spheres $S_i$ and disks $D_j$ with respect to induced metric are bounded from above by
$$
\Lambda_1:=2\pi\left(\sigma_0-\frac{4\pi^2}{r_0^2}\right)
;$$
\item we have
$$
\int_{S_i}\left(\sigma|_{S_i}-\frac{4\pi^2}{r_0^2}\right)\,\mathrm d\mu\leq 8\pi\mbox{ and } \int_{D_j}\left(\sigma|_{D_j}-\frac{4\pi^2}{r_0^2}\right)\,\mathrm d\mu\leq 4\pi;
$$
\item spheres $S_i$ are pairwise disjoint, disks $D_j$ are also pairwise disjoint but intersect one of $S_i$ transversely on its boundary $\partial D_j$;
\item the diameter of each component of the complement
\begin{equation*}
\hat S:=S_{r_0}-\left(\bigcup_{i=1}^k S_i\right)\cup\left(\bigcup_{j=1}^p D_j\right)
\end{equation*}
is no greater than
$$
\Lambda_2:=2\pi\left(\sigma_0-\frac{4\pi^2}{r_0^2}\right)+4r_0,
$$
where the diameter is computed with respect to the distance in $S_{r_0}$.
\end{itemize}
Denote $U_1,\ldots, U_m$ to be the components of the complement $\hat S$. From the selection argument we have the following alternative:
\begin{itemize}
\item either there is a homotopically non-trivial $2$-sphere $\tilde S$ in $\hat M$ such that
$$
\int_{\tilde S}\sigma|_{\tilde S}\,\mathrm d\mu_{\hat g}\leq 8\pi\left(1-\frac{4\pi^2}{\sigma_0r_0^2}\right)^{-1},
$$
\item or the boundary components $(\partial U_k)_\tau$ of region $U_k$ can be filled in $\hat M$.
\end{itemize}
In the latter case, through a similar argument as in the proof of Theorem \ref{Thm: main} we have
$$
L\leq r_0\Lip \hat f+F\left(\Lambda_2\Lip\hat f+F(3\Lambda_1\Lip\hat f)\right).
$$
Given any $\tilde\epsilon>0$ the proof is now completed by first taking $r_0$ large enough such that
$$
8\pi\left(1-\frac{4\pi^2}{\sigma_0r_0^2}\right)^{-1}\leq 8\pi+\tilde\epsilon
$$
and then taking
$
L
$ large enough.
\end{proof}

Now we are ready to prove Theorem \ref{Thm: main dominated}.
\begin{proof}[Proof of Theorem \ref{Thm: main dominated}]
Pick up an arbitrary smooth metric $g_0$ on $M_0$. Let $(\hat M_0,\hat g_0)$ be the universal covering of $(M_0,g_0)$ and let $p:(\hat M,\hat x)\to (M,x)$ be the covering space of $M$ such that $p_*(\pi_1(\hat M,\hat x))=\ker f_*$. After lifting the map $f$ to $\hat f:\hat M\to \hat M_0$ we have the following commutative diagram
\begin{equation*}
\xymatrix{
  \hat M \ar[d]_{p} \ar[r]^{\hat f}
                & \hat M_0 \ar[d]^{p_0}  \\
  M \ar[r]^{f}
                & M_0           .}
\end{equation*}
It follows from \cite[Lemma 18]{CLL2021} that the map $\hat f$ is proper and we have
$$\deg \hat f=\deg f\neq 0\mbox{ and } \Lip\hat f=\Lip f<+\infty.$$

Now we make a discussion similar as before.

{\it Case 1. $\Ric(g)\equiv 0$.} It follows from \cite[Theorem 3]{CG1971} that $(M,g)$ has a finite covering space $(\tilde M,\tilde g)$ with fundamental group $\mathbf Z^k$, where $k$ is the dimension of the $\mathbf R^k$-component in the splitting of the universal covering of $M$. From the Ricci-flatness all we need to show is $k\geq n-3$. This is obvious when $n\leq 3$ and so we deal with the case when $n=4,5$. Notice that $M_0$ has a finite covering space $\tilde M_0$ whose fundamental group is a quotient of $\mathbf Z^k$. Through further lifting we can assume that $\pi_1(\tilde M_0)$ is free and so isometric to $\mathbf Z^{k_0}$ for some $k_0\leq k$. From the $H_i$-vanishing condition for $i\geq 3$ of $\hat M_0$, we see that $\hat M_0$ is non-compact and so $k\geq k_0\geq 1$. This handles the case when $n=4$. For $n=5$ we take the following contradiction argument. Suppose that we have $k_0= 1$. Let $\gamma$ be a closed curve generating $\pi_1(\tilde M_0)\cong \mathbf Z$. From the Poincar\'e duality we can find an embedded two-sided hypersurface $\tilde\Sigma_0$ with non-zero intersection number with $\gamma$, which represents a non-zero homology class. Notice that the map $i_*:\pi_1(\tilde\Sigma_0)\to \pi_1(\tilde M_0)$ must be zero. Otherwise there is a closed curve $\gamma'$ on $\tilde\Sigma_0$ homotopic to $c\gamma$ for some $c\neq 0\in\mathbf Z$. This is impossible since it further implies that the intersection number of $\gamma$ and $\tilde\Sigma_0$ is zero. As a result, $\tilde\Sigma_0$ can be lifted to an embedded hypersurface $\hat\Sigma_0$ in $\hat M_0$. From the $H_i$-vanishing condition for $i\geq 3$ we see that $\hat \Sigma_0$ is null homologous in $\hat M_0$ and so is $\tilde \Sigma_0$ in $\tilde M_0$. This leads to a contradiction and so we have $k\geq k_0\geq 2$.

{\it Case 2a. $\Ric(v,v)<0$ for some unit vector $v\in TM$.} Through the same deformation argument in the proof of Theorem \ref{Thm: main}, we can find a smooth metric $g_t$ on $M$ such that $g_t\geq g$ and $(M,g_t)$ has $T^1$-stablized scalar curvature lower bound $R(g)+2\lambda_t$ for some positive constant $\lambda_t$. Denote $\hat g_t=p^*g_t$. Next we only consider the case when $n=5$, otherwise we consider $(M\times T^{5-n},g_t+\sum_i\mathrm d\theta_i^2)$ instead. Notice that the manifolds $(\hat M_0,\hat g_0)$ and $(\hat M,\hat g_t)$ satisfies the hypothesis of Proposition \ref{Prop: selection dominated} and so for any $\tilde\epsilon>0$ we can find a homotopically non-trivial $2$-sphere $S_{\tilde\epsilon}$ in $\hat M$ such that
$$
\int_{S_{\tilde\epsilon}}R(\hat g_t)+2\lambda_t\,\mathrm d\mu_{\hat g_t}\leq 8\pi+\tilde\epsilon.
$$
In particular, we have $\pi_2(M)\neq 0$. Let $S$ be the projection of $S_{\tilde\epsilon}$ in $M$. Just as in the proof of Theorem \ref{Thm: main}, by taking $\tilde\epsilon$ sufficiently small can show
$$
\int_S R(g)\,\mathrm d\mu_g<8\pi,
$$
which yields $Q_{GB}(M,g)<8\pi$.

{\it Case 2b. $\Ric(g)\geq 0$ but $\Ric(g)\nequiv 0$.} From the discussion in Case 1 we know that the universal covering $(\hat M,\hat g)$ of $(M,g)$ splits into product manifold $(\hat N^{n-k},\hat h)\times \mathbf R^k$ with $k\geq n-3$, where $(\hat N^{n-k},\hat h)$ is a simply connected closed manifold. Notice that $(M,g)$ admits a smooth metric with positive scalar curvature. From Proposition \ref{Prop: selection dominated} we see $\pi_2(M)=\pi_2(\hat M)\neq 0$ and then the Poincar\'e duality applied to $\hat N$ rules out the possibility that $n-k= 3$. This means that $\hat N$ is a $2$-sphere and so the projection $S$ of $\hat N$ in $M$ satisfies
$$
\int_S R(g)\,\mathrm d\mu_g=8\pi.
$$
As before, with the projection map $\hat M\to \hat N$ it is easy to see that for any homotopically non-trivial $2$-sphere $S$ in $M$ it holds
$$
\int_{S} R(g)\,\mathrm d\mu_{g}\geq 8\pi,
$$
and so we have $Q_{GB}(M,g)=8\pi$.
\end{proof}

Finally let us prove Corollary \ref{Cor: systole}.
\begin{proof}[Proof of Corollary \ref{Cor: systole}]
Since $(M,g)$ has positive scalar curvature, we have $\pi_2(M)\neq 0$ and
$$
\inf_MR(g)\cdot\sys_2(M,g)\leq Q_{GB}(M,g)\leq 8\pi.
$$
When the equality holds, we see that $R(g)$ is a positive constant function and $Q_{GB}(M,g)=8\pi$. As a result, the universal covering splits as $\mathbf S^2(1)\times \mathbf R^{n-2}$ up to rescaling.
\end{proof}

\noindent{\bf Conflict of interest statement} On behalf of all authors, the corresponding author states that there is no conflict of interest.

\bibliography{bib}
\bibliographystyle{alpha}
\end{document}